\newcommand{\del}[1]{\frac{\partial}{\partial #1}}
\newtheorem{theorem}{Theorem}
\newtheorem{lemma}[theorem]{Lemma}
\newtheorem{prop}[theorem]{Proposition}
\newtheorem*{theorema}{Theorem A}
\newtheorem*{theoremb}{Theorem B}
\newtheorem*{theoremc}{Theorem C}
\theoremstyle{remark}
\newtheorem{rema}[theorem]{Remark}
\newtheorem{exem}[theorem]{Example}
\newtheorem{defi}[theorem]{Definition}
\title[Vector fields, separatrices and Kato surfaces]{Vector fields, separatrices and Kato surfaces}
\author{Adolfo Guillot}\address{Instituto de Matem\'aticas, Unidad Cuernavaca,
 Universidad Nacional Aut\'onoma de M\'exico,
A.P.~273-3 Admon.~3,
Cuernavaca, Morelos, 62251, M\'exico}
\email{adolfo.guillot@im.unam.mx}
\thanks{Partially supported by Conacyt-Mexico grant no. 167594.}
\thanks{Key words: semicompleteness, separatrix, vector field, Kato surface, Stein surface}
\thanks{MSC 2010: 32S65, 32C20, 34M45}
\thanks{To appear in the \emph{Annales de l'Institut Fourier (Grenoble), 2013}}
\dedicatory{to the memory of Marco Brunella}
\begin{document}
\begin{abstract}We prove that a singular complex surface that admits a complete holomorphic vector field that has no invariant curve  through a singular point of the surface is obtained from a Kato surface by contracting some divisor (in particular, it is compact). We also prove that, in a singular Stein surface endowed with a complete holomorphic vector field, a singular point of the surface where the zeros of the vector field do not accumulate is either a quasihomogeneous or a cyclic quotient singularity. We give new proofs of some results  concerning the classification of compact complex surfaces admitting holomorphic vector fields. Our proofs rely in a combinatorial description of the vector field on a resolution of the singular point based on previous work of Rebelo and the author.  \\

\noindent \textsc{R\'esum\'e}. On prouve qu'un espace analytique complexe de dimension deux admettant un champ de vecteurs complet qui n'a pas de s\'eparatrice passant par un point singulier de la surface peut s'obtenir \`a partir d'une surface de Kato en effondrant un diviseur (en particulier, l'espace est compact). On prouve que, dans un espace analytique de Stein de dimension deux muni d'un champ de vecteurs complet, un point singulier de l'espace qui est un point d'\'equilibre isol\'e du champ est soit une singularit\'e quasi-homog\`ene, soit une singularit\'e de Klein. On red\'emontre quelques r\'esultats concernant la classification des surfaces complexes compactes admettant des champs de vecteurs holomorphes. Les preuves reposent sur des travaux r\'ecents de Rebelo et de l'auteur donnant une description combinatoire des champs de vecteurs complets.\end{abstract}

\maketitle

\clearpage 

\section{Introduction}
In the realm of ordinary differential equations in the complex domain, Briot  and Bouquet studied the differential equations of the  form
$$\frac{\partial y}{\partial x}=\frac{f(x,y)}{g(x,y)},$$
where~$f$ and~$g$ are holomorphic functions vanishing at the origin and without common factors. They gave conditions guaranteeing the  existence of holomorphic solutions~$y(x)$ such that~$y(0)=0$~\cite{briotbouquet}. The problem of Briot and Bouquet can be stated, more generally, as the problem of existence of curves~$\phi:(\mathbf{C},0)\to(\mathbf{C}^2,0)$ that are tangent to the foliation generated by the one-form
\begin{equation}\label{bb}f(x,y)dx-g(x,y)dy,\end{equation}
with the original problem corresponding to curves of the form~$\phi(x)=(x,y(x))$. A definitive solution to the problem of Briot and Bouquet was given by Camacho and Sad~\cite{CS-separatrix}, who proved the existence of invariant curves for every foliation of the form~(\ref{bb}). The theorem of Camacho and Sad motivated the quest for extensions of their result to other ambient spaces and foliations by leaves of other dimensions.

For foliations defined in normal complex analytic spaces of dimension two, Camacho proved that a separatrix through a singular point~$p$ exists if the dual graph of the exceptional divisor of a resolution of~$p$ is a tree~\cite{camacho-singular}. In the same article, Camacho exhibited foliations  without separatrices in germs of singular analytic surfaces. In contrast to the situation in~$(\mathbf{C}^2,0)$, not every foliation on a singular analytic surface is locally induced by a vector field, so we may ask the following question, attributed to G\'omez-Mont: \emph{Does a vector field in a singular surface have a separatrix passing through the singular point?}

Natural examples answering negatively this question  arise from  holomorphic vector fields on \emph{intermediate Kato surfaces}. Kato surfaces are non-singular compact complex surfaces; they are  minimal, non-K\"ahler and
belong to the class~$\mathrm{VII}$ in the Enriques-Kodaira Classification. They were introduced by Kato~\cite{kato} and, among them, we find \emph{intermediate} ones~\cite{dloussky-kato}. Intermediate Kato surfaces have a divisor~$D$ (the maximal reduced divisor of rational curves) having connected support and a negative-definite intersection form. Some of these admit a  holomorphic vector field~$X$. By the negative-definiteness of its intersection form, $D$ is preserved by~$X$. These vector fields are well-understood and we know that, close to~$D$, there are no singularities of~$X$ off~$D$ and that any germ of
curve invariant by the vector field is contained in~$D$~\cite[Lemme~2.2]{dloussky-vf1}. Hence, when contracting~$D$ to a point~$p$, we obtain a two-dimensional complex analytic space  endowed with a holomorphic vector field having an isolated equilibrium point at~$p$ which \emph{does not} have a separatrix. (By the compactness of the space, this vector field is complete).\\

These examples of vector fields   without separatrices in complex analytic spaces of dimension two are unique within the class of vector fields in \emph{compact} ones (it is probably easy to deduce this from the classification of holomorphic vector fields on compact complex surfaces~\cite[Thm.~0.3]{dloussky-vf2}). Our main result affirms that they are also unique within the larger class of \emph{complete} vector fields:

\begin{theorema} Let~$S$ be a connected, normal, irreducible, complex, two-dimensional analytic space with a singularity at~$p\in S$ and let~$X$ be a complete holomorphic vector field on~$S$. If the foliation induced by~$X$ has no separatrix through~$p$, the minimal resolution of~$S$ at~$p$ is a Kato surface.
\end{theorema}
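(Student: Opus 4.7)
The plan is to reduce the problem to a local analysis along the exceptional divisor of the minimal resolution and then to globalize using completeness of the flow. Let $\pi\colon \tilde{S}\to S$ be the minimal resolution of $(S,p)$ and $D=\pi^{-1}(p)$, and lift $X$ to a holomorphic vector field $\tilde{X}$ on $\tilde{S}$. Since otherwise the orbit of $X$ through $p$ would produce a separatrix, $p$ must be an equilibrium of $X$; this forces $D$ to be $\tilde{X}$-invariant. The same hypothesis implies that no germ of $\tilde{X}$-invariant curve meeting $D$ escapes $D$.

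After a further blow-up (which does not alter the minimal resolution of $S$) we may assume that $D$ is a simple normal-crossings divisor with each irreducible component $\tilde{X}$-invariant. The combinatorial description of germs of (semi)complete holomorphic vector fields tangent to such divisors, developed by Rebelo and the author, associates to $(\tilde{X},D)$ a weighted dual graph equipped with residues, Camacho--Sad indices and local normal forms at each node, subject to a list of compatibility equations. Combined with three structural constraints---negative-definiteness of the intersection form on $D$ (a consequence of normality of $p$), absence of $\tilde{X}$-invariant curve outside $D$, and minimality of $\pi$---these equations should single out exactly the combinatorial types realized by the maximal divisor of rational curves of an intermediate Kato surface: a cycle of smooth rational curves decorated with possibly attached trees, with prescribed self-intersections, equipped with a vector field coinciding, near $D$, with the standard Kato vector field.

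The final step is to promote this germ-theoretic identification to a global one. Completeness of $X$ provides, for every complex time $t$, a holomorphic automorphism $\phi_t$ of $\tilde{S}$. Restricting $\phi_t$ to a holomorphic disk transverse to a suitable component of $D$ and using the local Kato model, one extracts a contracting germ of biholomorphism $(\mathbf{C}^2,0)\to(\mathbf{C}^2,0)$ of precisely the type used in Kato's construction. Blowing up this germ according to the pattern dictated by $D$ and gluing the resulting boundary via the contraction produces a Kato surface $K$ that is biholomorphic, in a neighborhood of its distinguished divisor, to a neighborhood of $D$ in $\tilde{S}$; the flow then propagates this identification to the complement, exhibiting $\tilde{S}$ as isomorphic to $K$. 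In particular, $\tilde{S}$ is compact, which completes the proof.

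The main obstacle is precisely this globalization: the combinatorial analysis only pins down the situation in a neighborhood of $D$, and concluding that $\tilde{S}$ is an actual compact Kato surface (not merely germ-equivalent to a piece of one) requires an essential use of completeness to produce and exploit the contracting Kato germ, together with a careful verification that the flow covers all of $\tilde{S}$ without leaving any component of $\tilde{S}\setminus D$ unaccounted for.
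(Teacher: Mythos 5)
Your outline follows the same broad strategy as the paper (reduce to a semicomplete germ along the exceptional divisor, pin down the combinatorics, build a Kato surface, then use completeness to globalize), but two of its load-bearing steps are either wrong or missing. The most serious problem is your mechanism for producing the Kato contracting germ. You propose to extract it from the time-$t$ automorphisms $\phi_t$ restricted near $D$; this cannot work. Near the relevant corner of the divisor the vector field is, in suitable coordinates, $x^ny^{mn-1}(-mx\,\partial/\partial x+y\,\partial/\partial y)$, pushed by $(x,y)\mapsto(xy^m,y)$ to $z^n\,\partial/\partial w$, which has the first integral $z$; every flow map preserves this first integral and each leaf of the foliation, whereas a Kato germ of the required type acts on the transverse coordinate by $z\mapsto z^{k+1}$ and is genuinely contracting transversally. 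In the actual proof the germ has nothing to do with completeness: it is produced by a surgery that cuts the unique cycle of $D$ at a corner point, glues in a copy of $(\mathbf{C}^2,0)$ via $\psi(x,y)=(xy^m,y)$, and then contracts the resulting tree of rational orbifolds to a smooth point (this contraction is exactly the content of the lemma on trees rooted at an invariant arc with ramification index $1$); the Kato data is $\widehat{\pi}\circ\widehat{\sigma}$ arising from this cut-and-paste, and only semicompleteness is used. Relatedly, your combinatorial step is asserted rather than proved: you never invoke Camacho's theorem, which is what guarantees the dual graph has a cycle (a tree would force a separatrix), and the statement that the compatibility equations ``single out'' the Kato combinatorics, with the vector field ``coinciding near $D$ with the standard Kato vector field,'' is precisely the hard content of the cycle proposition and of the germ-level theorem; in the paper that identification is a consequence of the construction, not an input one can quote.

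The globalization is also thinner than it needs to be. Completeness does enter only here, but ``the flow propagates the identification'' hides exactly the delicate points: one defines $\Psi(q)=\Phi_X(-t_q,\Psi_0(\Phi_Z(t_q,q)))$ for a time $t_q$ carrying $q$ into a good neighborhood $U$ of the divisor, and one must prove (i) independence of the choice of $t_q$, which requires knowing that for each orbit the set of times landing in $U$ is connected; (ii) injectivity, i.e.\ that no two orbits of the Kato model get identified in $S$ and that no periods appear; and (iii) surjectivity. These are obtained from the attractor property of the rational divisor in an intermediate Kato surface together with a precise description of the domains in $\mathbf{C}$ on which the solutions are defined (an increasing union of annular pieces whose complementary components are compact), which in turn rests on the normal form $\widehat{f}(z,w)=(z^{k+1},z^{nk}w+\tau(z))$ of the contracting germ. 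You correctly flag this verification as the main obstacle, but since it is not supplied, and since your route to the contracting germ itself is not viable, the argument as written does not close.
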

We stress the fact that the only global assumption on the surface is the completeness of the vector field and that the compactness of~$S$ is a consequence of our result. It also implies that~$p$ is the only singular point of~$S$ and that~$p$ is an isolated equilibrium point of~$X$.

Theorem~A guarantees the existence of separatrices for complete vector fields on two-dimensional analytic spaces which are, for example, not compact. For Stein spaces, this result can be strengthened:
\begin{theoremb} Let~$S$ be a normal, irreducible  complex two-dimensional Stein space and~$X$ a complete holomorphic vector field on~$S$. Let~$p$ be a singular point of~$S$ that is an isolated equilibrium point of~$X$. Either
\begin{itemize}
 \item there are either one or two separatrices of~$X$ through~$p$ and~$(S,p)$ is a cyclic quotient singularity; or
 \item there is an infinite number of separatrices of~$X$ through~$p$ and~$X$ induces an action of~$\mathbf{C}^*$.
\end{itemize}
\end{theoremb}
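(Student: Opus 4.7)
The plan is to work on the minimal good resolution of $p$ and to combine Theorem~A with the combinatorial classification of complete holomorphic vector fields along an invariant compact divisor alluded to in the abstract (previous joint work with Rebelo).

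Let $\pi\colon\tilde S\to S$ be the minimal good resolution of $p$, let $E$ be its exceptional divisor, and let $\tilde X=\pi^{*}X$ be the lifted vector field. Since $p$ is an isolated equilibrium, all zeros of $\tilde X$ in a neighborhood of $E$ lie on $E$; in particular $\tilde X$ is tangent to $E$, and the completeness of $X$ transfers to a completeness of $\tilde X$ along $E$. A first observation is that $X$ must have at least one separatrix through $p$: otherwise Theorem~A would force $\tilde S$ to be a Kato surface, hence compact, contradicting the fact that $\tilde S$ resolves a point of the non-compact Stein space~$S$. In addition, the Stein hypothesis guarantees that the intersection form on~$E$ is negative definite.

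Applying the combinatorial description of complete vector fields near a compact invariant divisor produces a finite list of admissible configurations, each specified by the weighted dual graph of $E$ together with local normal forms of $\tilde X$ at every singular point of $E$ and at every additional zero on its smooth part. Separatrices of $X$ through $p$ correspond bijectively to germs of $\tilde X$-invariant curves transverse to $E$, and these can only emanate from the zeros of $\tilde X$ lying on~$E$. I would then go through the list case by case. In the configurations where the transverse invariant curves form a discrete (hence finite) set, the combination of the negative-definite intersection form with the local normal forms forces the admissible dual graphs to be linear chains of rational curves, so that $(S,p)$ is a cyclic quotient singularity; exactly one transverse separatrix is contributed by each end of the chain that carries a non-degenerate simple zero of $\tilde X$, yielding one or two separatrices through~$p$. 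In the configurations where the transverse invariant curves form a continuous family, $\tilde X$ admits a meromorphic first integral along a whole component of $E$, which pushes down to a meromorphic first integral of $X$ on~$S$; its level sets together with the flow of $X$ then exhibit $X$ as the infinitesimal generator of a $\mathbf{C}^{*}$-action.

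The step I expect to be the main obstacle is the globalization of the periodicity in the second case: the local combinatorial analysis produces periodicity of the real flow of $\tilde X$ only on a neighborhood of~$E$, while the $\mathbf{C}^{*}$-action must be built on all of the non-compact Stein surface~$S$. Here I would exploit the completeness of $X$ and the fact that the time-$T$ flow of $X$ agrees with the identity on an open set near $p$ (from the local period $T$ obtained above) to conclude, by analytic continuation on the connected Stein space~$S$, that it is the identity everywhere on~$S$; combined with the local meromorphic first integral, this should upgrade the real-periodic flow to a genuine holomorphic $\mathbf{C}^{*}$-action on all of~$S$.
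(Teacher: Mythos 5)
There is a genuine gap, and it sits exactly where the content of the theorem lies. Your argument reduces everything to the phrase ``go through the list case by case'' and then asserts that negative definiteness of the intersection form together with the local normal forms forces the dual graph to be a linear chain of rational curves. That assertion is not a consequence of the two ingredients you invoke. The reason is that you never use the Stein hypothesis in the way it is actually needed. (Negative definiteness of the exceptional divisor is automatic for any resolution of a normal surface singularity, Stein or not, so that use of Steinness buys nothing; and non-compactness only gives, via Theorem~A, the existence of at least one separatrix, which you correctly note.) The essential use of Steinness is different: the restriction of $X$ to a separatrix $\gamma$ through $p$ is a semicomplete vector field on a disc, hence after reparametrization either $\lambda z\,\partial/\partial z$ or $z^2\,\partial/\partial z$; in the second case the orbit carrying $\gamma$ has trivial stabilizer and is compactified by $p$ into a rational curve in $S$, which is impossible precisely because $S$ is Stein (contains no compact curves). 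Ruling out $z^2\,\partial/\partial z$ is what eliminates the \emph{finite ramification} local models at the points where strict transforms of separatrices meet the exceptional divisor $E$. If those attachment points are not excluded, the admissible configurations include trees of rational orbifolds rooted at a separatrix with positive ramification index (exactly the configurations of Lemma~\ref{notreeram}, which occur for quasihomogeneous-type singularities), and these are perfectly compatible with a negative definite intersection form while having finitely many separatrices and a dual graph that is not a Hirzebruch--Jung string. So the dichotomy ``finitely many separatrices $\Rightarrow$ linear chain $\Rightarrow$ cyclic quotient'' does not follow from your stated ingredients; it needs the elimination of the $z^2$-type separatrices first, and then a genuine analysis (as in the paper: the only local models compatible with $\lambda z\,\partial/\partial z$ along the separatrix, zeros contained in $E$ and $E$ invariant are the singular non-degenerate and saddle-node ones, from which the chain structure of $E$ is derived, using the type $(\infty,\infty)$ of the intermediate components and Lemma~\ref{notreeram} to forbid branching).

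Two smaller remarks. First, the count of separatrices in your finite case (``one per end carrying a non-degenerate simple zero'') is again asserted rather than derived; in the paper it comes out of the same local-model analysis (one separatrix in the $A_{n,1}$ and double saddle-node configurations, two in the chain ending at a singular non-degenerate point). Second, in the dicritical case your globalization step is fine in spirit and close to what is implicitly needed: once some component of $E$ is not invariant, the eigenvalue of $X$ along the transverse orbits is constant, so the flow is periodic of period $2\pi i\lambda^{-1}$ near that component, and completeness plus the identity principle on the connected surface makes the time-$2\pi i\lambda^{-1}$ map the identity globally, giving the $\mathbf{C}^*$-action; the detour through a meromorphic first integral pushed down to $S$ is unnecessary and would itself require justification.
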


The classification of holomorphic vector fields on compact complex surfaces was achieved by Dloussky, Oeljeklaus and Toma in~\cite[Thm.~0.3]{dloussky-vf2}. The last piece in this classification is given by vector fields in Kato surfaces. The natural way in which Kato surfaces appear in the proof of Theorem~A will allow us to give an alternative proof of some parts of the classification of holomorphic vector fields on compact complex surfaces, as suggested by Matei Toma. In a relatively self-contained manner and without relying on the Enriques-Kodaira Classification, we will, by borrowing some arguments from~\cite{dloussky-vf0}, prove the following result:

\begin{theoremc}Let~$X$ be a non-trivial holomorphic vector field with zeros on the minimal compact complex surface~$S$ inducing an effective action of~$\mathbf{C}$. At least one of the following holds:
\begin{enumerate}
\item   $S$ is rational or ruled.
\item  $X$ has a first integral.
\item  There is an effective divisor~$Z$ in~$S$ such that~$Z^2=0$.
\item  $S$ is a Kato surface.
\end{enumerate}
\end{theoremc}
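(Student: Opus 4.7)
The plan is to reduce Theorem C to Theorem A by contracting a carefully chosen compact invariant divisor. Let $\mathcal{F}$ denote the saturated foliation defined by $X$ (obtained by dividing out the codimension-one component of the zero locus of $X$), and let $D \subset S$ be the union of all compact irreducible curves invariant by $\mathcal{F}$. Camacho--Sad guarantees that at every isolated singularity of $\mathcal{F}$ there is at least one germ of invariant curve, so the whole argument turns on whether every such germ extends to a compact invariant curve (hence lies in $D$) or not.

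First I would dispose of the case in which the intersection form of $D$ fails to be negative definite. If some irreducible component of $D$ has strictly positive self-intersection, then the existence of a non-trivial holomorphic vector field with zeros, combined with the Hodge index theorem, forces $S$ to be rational or ruled, yielding case~(1); this is where I would borrow directly from the arguments in the spirit of Dloussky--Oeljeklaus--Toma. If some connected subdivisor of $D$ has zero self-intersection, either it moves in a pencil and produces an effective $Z$ with $Z^2=0$ (case~(3)), or the associated pencil is everywhere tangent to $\mathcal{F}$ and therefore $X$ admits a meromorphic first integral (case~(2)). Assume henceforth that the intersection form of $D$ is negative definite. Then $D$ contracts to a finite set of points in a normal analytic surface $\bar S$ carrying a complete holomorphic vector field $\bar X$ induced by $X$. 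Since $S$ is minimal, $S$ is the minimal resolution of $\bar S$; if $\bar X$ has no separatrix at any singular point of $\bar S$, Theorem~A applies at each singular point, and after assembling the local descriptions of the neighbourhoods of the connected components of $D$, one concludes that $S$ is itself a Kato surface (case~(4)).

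The main obstacle is thus to rule out the existence of a separatrix of $\bar X$ at some contracted singular point $\bar p$. Such a separatrix lifts to a germ of invariant analytic curve in $S$ meeting $D$ transversally but not contained in $D$; by the maximality of $D$, this germ must belong to a non-compact leaf of $\mathcal{F}$ whose closure meets $D$. The hardest step will be to argue that the presence of such an accumulating leaf, combined with the global structure of the effective $\mathbf{C}$-action (in particular, the fact that generic orbits are non-closed and that the flow is complete), produces either a meromorphic first integral of $\mathcal{F}$ or an effective divisor on $S$ of vanishing self-intersection, returning us to cases~(2) or~(3). This is where the arguments borrowed from Dloussky's earlier work on vector fields on class $\mathrm{VII}$ surfaces will be indispensable, and in particular where the combinatorial description of complete vector fields on resolutions from the first part of the paper enters.
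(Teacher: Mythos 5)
Your overall strategy---collect the compact invariant curves into a divisor $D$, dispose of the non-negative-definite configurations, and in the negative-definite case contract $D$ and invoke Theorem~A---is close in spirit to the paper's, but the proposal has a genuine gap exactly where you yourself flag ``the hardest step''. The paper does not rule out separatrices at the contracted points by a global dynamical argument about accumulating leaves; the actual content is twofold. First, arguing as in Dloussky--Oeljeklaus--Toma (Lemme~2.2), one shows that \emph{every} germ of curve invariant by $\mathcal{F}$ that meets $D$ is contained in $D$: along such a germ the restriction of $X$ is either $z^2\,\partial/\partial z$, in which case the orbit compactifies to a rational curve, or $\lambda z\,\partial/\partial z$, in which case the germ lies in the fixed-point set of the time-$2i\pi\lambda^{-1}$ flow, a proper analytic subset of $S$ precisely because the $\mathbf{C}$-action is effective. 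This is what makes Proposition~\ref{propcycles} applicable and what guarantees the no-separatrix hypothesis after contracting a component containing a cycle. Second, for connected components of $D$ whose dual graph is a tree---the case where, by Camacho's theorem, a separatrix at the contracted point is unavoidable, so your reduction to Theorem~A cannot work---the conclusion is not recovered from the dynamics of the complete flow; it is obtained combinatorially: chains carrying saddle-nodes or singular non-degenerate points either yield a curve of vanishing self-intersection or contradict the minimal good hypothesis via an integrality argument on the eigenvalues, elliptic components have square zero by Camacho--Sad, and in the remaining trees the reciprocity of the Camacho--Sad indices at finite and infinite ramification points allows one to write down an explicit effective divisor $Z$ supported on the tree with $Z^2=0$ (following Prop.~2.10 of Dloussky--Oeljeklaus--Toma). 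None of this is supplied, or even sketched, in your proposal; it is deferred to ``arguments borrowed from Dloussky's earlier work'', which is where the theorem actually gets proved.

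Two further points. You never address the possibility that there are infinitely many compact curves tangent to $\mathcal{F}$, in which case your $D$ is not a divisor at all; the paper handles this at the outset via the Jouanolou--Ghys theorem, which gives a first integral (case~(2)). And the claim that a component of positive self-intersection forces $S$ rational or ruled ``by the Hodge index theorem'' is not an argument: an effective divisor of positive square exists on many non-ruled surfaces. The paper instead uses the standard fact that an irreducible \emph{rational} curve of non-negative self-intersection forces $S$ rational or ruled, and shows separately (through the induced affine structure and Camacho--Sad) that invariant elliptic curves have self-intersection zero, which lands in case~(3); similarly, a connected subdivisor of square zero is already case~(3) as stated, so no pencil or tangency dichotomy is needed there.
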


Our results are based in a local analysis of the vector field in a neighborhood of some invariant divisors of the surface (the exceptional divisor of a resolution in Theorems~A and~B, the maximal invariant divisor in Theorem~C), where we use the notion of \emph{semicompleteness} to exploit, locally, the  completeness assumption on the vector  field. We make extensive use of the ``bimeromorphic'' theory of semicomplete vector fields on surfaces developed jointly with Rebelo in~\cite{guillot-rebelo}.\\

This article is organized as follows. In Section~\ref{sec-localmodels} we will recall some standard facts about vector fields and foliations on complex surfaces and analytic spaces (the reader is however supposed to be familiar, for example, with the material discussed in the first Chapter of~\cite{brunella} and in~\cite[Ch.~I, \S8]{BPV}). We also discuss semicomplete vector fields on surfaces based mainly on~\cite{guillot-rebelo}. Section~\ref{sec-cycles} describes the combinatorics of semicomplete vector fields in the neighborhood of divisors containing cycles. Theorem~A will be proved in Section~\ref{sec:kato}. It relates the resolution of a singularity admitting a semicomplete vector field without separatrices to the construction of Kato surfaces. Theorems~B and~C will be respectively proved in Sections~\ref{sec:stein} and~\ref{sec:comp}.\\

The author heartily thanks Patrick Popescu-Pampu, Julio Rebelo, Jos\'e Seade, Jawad Snoussi, Matei Toma, Meral Tosun and the anonymous referee.

\section{Generalities}\label{sec-localmodels}

\subsection{Vector fields and foliations}

Let~$S$ be complex (non-singular) surface and $X$ a holomorphic vector field
in~$S$. Around every point~$q$ in~$S$, $X$ may be locally written as~$fY$
where~$f$ is a holomorphic function and~$Y$ is a holomorphic vector field with
isolated singularities, well-defined up to multiplication by a non-vanishing
holomorphic function. The locally defined vector field~$Y$ defines a globally
well-defined foliation~$\mathcal{F}$, \emph{the foliation induced by~$X$}, that
will be denoted by~$\mathcal{F}_X$ whenever we need to stress its relation
to~$X$. If~$Y(p)\neq 0$, we say that~$p$ is a \emph{regular} point
of~$\mathcal{F}$ and we say that~$p$ is a \emph{singularity}
of~$\mathcal{F}$ otherwise.  The foliation~$\mathcal{F}$ is said to be
\emph{reduced in Seidenberg's sense} if the linear part of~$Y$ at a
singularity~$p$ of~$\mathcal{F}$ is non-nilpotent and, if it has two
non-vanishing eigenvalues, if their ratio is not a positive rational.
Seidenberg's Theorem affirms that every foliation may be brought, in a locally
finite number of blowups, to one where every singularity is reduced (see, for 
example, \cite{cs-livro}). We say that the curve~$\gamma\subset S$ with reduced
equation~$g$ is \emph{invariant by~$X$} if~$g$ divides~$X\cdot g$. Moreover,
if~$X=fY$ and~$g$ divides~$Y\cdot g$ we say that~$\gamma$ is \emph{invariant
by~$\mathcal{F}$}  (if a curve is invariant by~$\mathcal{F}$ it is also
invariant by~$X$ but the converse need not be true). In this last case, if~$g$
is irreducible and~$f=g^r h$ with~$g$ and~$h$ relatively prime, we say
that~$r\in\mathbf{Z}$ is the \emph{order of~$X$ along~$\gamma$}, and
write~$\mathrm{ord}(X,\gamma)=r$. 

If~$C$ is a compact curve invariant by~$\mathcal{F}$, its self-intersection can be
calculated by means of the Camacho-Sad formula: for
each singularity~$p_1,\ldots, p_m$, of~$\mathcal{F}$ lying in~$C$ the
\emph{Camacho-Sad} index, $\mathrm{CS}(\mathcal{F},C,p_i)\in\mathbf{C}$, is
defined and the Camacho-Sad formula yields
$C\cdot C=\sum_i \mathrm{CS}(\mathcal{F},C,p_i)$~\cite{CS-separatrix}.\\

The vector field~$X$ endows the foliation~$\mathcal{F}$ with a \emph{leafwise
affine structure} (with singularities) varying holomorphically in the transverse
direction: every curve~$\gamma$ invariant by~$\mathcal{F}$ inherits from~$X$ an
\emph{affine structure} (with singularities)~\cite[Prop.~8]{guillot-rebelo}. In
the case where the vector field does not vanish along the curve, the charts of
this structure are given by the inverses of the local solutions of the vector
field~\cite[\S3.1]{guillot-rebelo}. The affine structure affects every
point~$p\in \gamma$ with a \emph{ramification index}
$\mathrm{ind}(C,p)\in\mathbf{C}^*\cup\{\infty\}$ \cite[Def.~4]{guillot-rebelo},
whose value is~$1$ except for a discrete (with the plaque topology) set of
points in~$\gamma$, the \emph{singularities} of the affine structure. If~$C$ is
a compact curve invariant by~$\mathcal{F}$ of Euler characteristic~$\chi(C)$, we
have, for the above indexes, the Poincar\'e-Hopf
relation~\cite[Prop.~5]{guillot-rebelo}:
\begin{equation}\label{poincarehopf}\chi(C)=\sum_{p\in C} 1-\frac{1}{\mathrm{ind}(C,p)}.\end{equation}

For vector fields, we have the following definition~\cite[Def.~16]{guillot-rebelo}:

\begin{defi}\label{def:reduced} A holomorphic vector field~$X$ on a surface~$S$ is said to be \emph{reduced} if the induced foliation~$\mathcal{F}_X$ is reduced in Seidenberg's sense and if for every point~$p$, the union of all the curves containing~$p$ that are invariant by~$X$ is a curve with normal crossings. A couple~$(X,D)$ of a holomorphic vector field~$X$ and a  divisor~$D$ invariant by~$X$ is said to be \emph{minimal good} if~$X$ is reduced in a neighborhood of~$D$, if every irreducible component of~$D$ is non-singular and if no exceptional curve of the first kind belonging to~$D$ may be collapsed while keeping~$X$ reduced and the corresponding divisor non-singular. \end{defi}

Every holomorphic vector field may be transformed, by a locally finite number of blowups, to a \emph{reduced} one (combine Seidenberg's Theorem with the resolution of embedded curves in surfaces).

\subsection{Vector fields in analytic  spaces}

Let~$(S,\varpi)$ be a germ of irreducible, reduced, complex-analytic two-dimen\-sional space. By a holomorphic vector field on~$(S, \varpi)$ we mean, indistinctly, either a derivation of the local ring~$\mathcal{O}_{S,\varpi}$ or, for an embedding~$j:(S,\varpi)\to (\mathbf{C}^n,0)$, the restriction to~$j(S)$ of a holomorphic vector field in~$\mathbf{C}^n$ tangent to~$j(S)$ \cite[\S3]{rossi}. By the following Proposition (obtained with the help of Jawad Snoussi), a holomorphic vector field on~$(S,\varpi)$  is also equivalent to a holomorphic vector field on a resolution.

\begin{prop}\label{desing} Let~$X$ be a holomorphic vector field in the germ of two-dimensional irreducible analytic space~$(S,\varpi)$. Let~$M:(S_\mu,D_\mu)\to(S,\varpi)$ be the minimal resolution. There exists a holomorphic vector field~$X_\mu$ in~$S_\mu$ such that~$M_*X_\mu=X$.
\end{prop}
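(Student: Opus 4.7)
The plan is to define $X_\mu$ by transporting $X$ through the biholomorphism $M\colon S_\mu\setminus D_\mu\to S\setminus\{\varpi\}$ (which exists because $M$ is a proper modification with exceptional set $D_\mu$) and then to show that the resulting holomorphic vector field extends across $D_\mu$.

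First I would reduce to the case where $(S,\varpi)$ is normal: if not, the normalization $\nu\colon\tilde S\to S$ is a finite holomorphic map, Seidenberg's classical lemma on derivations of reduced local rings lifts $X$ uniquely to a derivation $\tilde X$ of $\mathcal{O}_{\tilde S}$, and since the minimal resolution factors as $M=\nu\circ\tilde M$, it suffices to lift $\tilde X$ from the normal surface~$\tilde S$.

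Assuming $(S,\varpi)$ normal, $\varpi$ becomes an isolated singularity and the transported vector field is holomorphic on $S_\mu\setminus D_\mu$. Since $TS_\mu$ is locally free on the smooth surface $S_\mu$, it extends a priori meromorphically across $D_\mu$ with polar support contained in $D_\mu$. To bound the poles I would work locally via an embedding: choose $(S,\varpi)\hookrightarrow(\mathbf{C}^n,0)$, extend $X$ to a holomorphic vector field $\tilde X=\sum f_i\,\partial_{z_i}$ on $\mathbf{C}^n$ tangent to $S$, and write $M=(m_1,\dots,m_n)$. The condition $M_*X_\mu=X$ then reads $X_\mu(m_i)=f_i\circ M$ for every $i$, a system of $n$ scalar equations in the two local components of $X_\mu$. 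At a smooth point $q$ of $D_\mu$, in local coordinates $(u,v)$ with $D_\mu=\{u=0\}$, each $m_i$ is divisible by $u$ (as $M$ contracts $\{u=0\}$ to $\varpi$), and a direct analysis of the $n\times 2$ Jacobian $(\partial_u m_i,\partial_v m_i)$---whose rank is two off $\{u=0\}$ and is one at a generic point of $\{u=0\}$, because $M$ cannot be constant on any open subset of $S_\mu$---shows that the solution $X_\mu=A\partial_u+B\partial_v$ of this system is holomorphic at $q$.

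The main obstacle will be to handle the finitely many points of $D_\mu$ where the rank analysis degenerates: the normal-crossing intersection points of irreducible components of $D_\mu$ and any isolated points of $D_\mu$ at which the Jacobian drops further in rank. Such points form a discrete subset of the smooth surface~$S_\mu$, hence one of complex codimension two; since $X_\mu$ is already known to be a holomorphic section of the locally free sheaf $TS_\mu$ off this discrete set, Hartogs' extension theorem for sections of a locally free sheaf across a codimension-two subset promotes it to a holomorphic section on all of~$S_\mu$, yielding the desired vector field with $M_*X_\mu=X$.
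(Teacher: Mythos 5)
Your overall strategy---transporting $X$ through the biholomorphism $M\colon S_\mu\setminus D_\mu\to S\setminus\{\varpi\}$ and then extending across $D_\mu$---is a genuinely different route from the paper's, but the decisive step is not established, and as described it cannot be. Two of your stated facts are false in general: a holomorphic section of a locally free sheaf on the complement of a divisor need not extend meromorphically (essential singularities are possible; meromorphy must instead be extracted, say, from Cramer's rule applied to your linear system), and the matrix $(\partial_u m_i,\partial_v m_i)$ need not have rank one at generic points of $\{u=0\}$: if the component occurs with multiplicity at least two in the maximal-ideal cycle (as already happens for many rational singularities), every $m_i$ vanishes to order at least two along it and the rank is zero along the whole component, so your degeneracy locus is not discrete and the Hartogs step does not apply as set up. More seriously, the rank behaviour you invoke does not imply holomorphy of $(A,B)$: for the blowup of $\mathbf{C}^2$ at the origin with $X=\partial/\partial x$ one has $m_1=u$, $m_2=uv$, rank two off $\{u=0\}$ and rank one along it, yet the solution is $A=1$, $B=-v/u$, which has a pole; the same happens along the second exceptional curve when one blows up a non-fixed point of the lift of $x\,\partial/\partial x+2y\,\partial/\partial y$, even though this field vanishes at the origin. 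Hence no local analysis of the kind you sketch can close the argument: any correct proof must use exactly the inputs your proposal never touches, namely that on a normal germ the field is forced to vanish at the singular point (the singular locus is invariant), and that the resolution is the \emph{minimal} one---the lift genuinely fails on non-minimal resolutions, so minimality has to enter the extension step somewhere.

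For comparison, the paper sidesteps the extension problem entirely: by Zariski, a resolution is obtained by alternating normalizations and blowups of singular points of normal surfaces; Seidenberg's theorem on derivations and integral closure lifts the field through each normalization, the forced vanishing of the (ambient) field at the singular point lifts it through each blowup, and the minimal resolution is then reached from Zariski's by contracting exceptional curves of the first kind, under which the pushforward of a holomorphic field remains holomorphic. If you wish to salvage the direct approach, you would need to compare, along each component of $D_\mu$, the vanishing orders of the $f_i\circ M$ with those of the $m_i$, which in effect reconstructs this inductive argument.
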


\begin{proof} Zariski proved that a resolution of~$(S,\varpi)$ may be obtained by alternating two procedures: normalization and the blowing up of singular points in normal surfaces (see~\cite{zariski} for the algebraic case, \cite{bondil-le} for the analytic one). In order to prove the existence of a resolution where the preimage of the vector field extends holomorphically to the exceptional divisor, it suffices to show that these two procedures transform holomorphic vector fields into holomorphic ones. 

We begin with normalization. Suppose, up to separating the irreducible components of~$S$, that~$S$ is irreducible at~$\varpi$. Let~$\mathcal{O}$ be the local ring of holomorphic functions at~$\varpi$, $F$ its field of fractions and~$\overline{\mathcal{O}}\subset F$ the integral closure of~$\mathcal{O}$. Let~$d:\mathcal{O}\to \mathcal{O}$ be the derivation induced by~$X$. This derivation extends to~$d_F:F\to F$. Since~$\mathcal{O}$ is a Noetherian integral domain containing~$\mathbf{Q}$, a theorem of Seidenberg~\cite{seidenberg-integral} guarantees that~$d_F(\overline{\mathcal{O}})\subset \overline{\mathcal{O}}$. Hence, if~$\pi:(\overline{S},\overline{\varpi})\to(S,\varpi)$ is the normalization, there exists a holomorphic vector field in~$(\overline{S},\overline{\varpi})$ mapping to~$X$ via~$\pi$. This proves that  normalization transforms holomorphic vector fields into holomorphic ones. 

Let us now deal with blowups. Suppose that~$S$ is normal at the singular
point~$\varpi$ and let~$j:(S,\varpi)\to(\mathbf{C}^n,0)$ be an embedding.
Let~$Y$ be a holomorphic vector field in~$(\mathbf{C}^n,0)$ that restricts
to~$j_*X$ in~$j(S)$. Since~$S$ is normal at~$\varpi$, $0$ is an isolated
singularity of~$j(S)$ and must be preserved by~$Y$. Hence, $Y$ vanishes at
the origin of~$\mathbf{C}^n$. Upon blowing up the latter, $Y$ extends as a
holomorphic vector field to the exceptional divisor. This proves that the blowup
of singular points in normal analytic surfaces transforms holomorphic vector
fields into holomorphic ones. 

In consequence, in Zariski's resolution of~$(S,\varpi)$, $X$ is transformed as a holomorphic vector field. Since the minimal resolution of~$(S,\varpi)$ may be obtained from Zariski's one by contracting exceptional curves of the first kind and since this procedure maps holomorphic vector fields to holomorphic ones, the Proposition is proved.
\end{proof}

\begin{defi}\label{reducedvf} Let~$X_0$ be a germ of holomorphic vector field in the germ of normal two-dimensional analytic space~$(S_0,\varpi)$.
A \emph{resolution} $\pi:(S,D,X)\to(S_0,\varpi,X_0)$ is a resolution
$\pi:(S,D)\to(S_0,\varpi)$ and a holomorphic vector field~$X$ on~$S$, reduced in
the sense of Definition~\ref{def:reduced}, such that~$\pi_*(X)=X_0$. A resolution $\pi:(S,D,X)\to(S_0,\varpi,X_0)$ is said to be \emph{minimal good} if~$(X,D)$ is minimal good in the sense of Definition~\ref{def:reduced}.
\end{defi}

\begin{prop}  Let~$X_0$ be a germ of holomorphic vector field in the germ of normal two-dimensional analytic space~$(S_0,\varpi)$. Then it admits a minimal good resolution~$\pi:(S,D,X)\to(S_0,\varpi,X_0)$ in the sense of Definition~\ref{reducedvf}.
\end{prop}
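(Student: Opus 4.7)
The plan is to start from the resolution furnished by Proposition~\ref{desing} and then successively modify it by blowups and blowdowns, first to achieve the reduced/normal-crossings conditions and then to achieve minimality.

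First, I would apply Proposition~\ref{desing} to obtain a resolution $M:(S_\mu,D_\mu)\to(S_0,\varpi)$ together with a holomorphic vector field $X_\mu$ on $S_\mu$ such that $M_*X_\mu=X_0$. This takes care of producing \emph{some} holomorphic vector field on a resolution; the remaining work is purely in the category of holomorphic vector fields on smooth surfaces. At this initial stage, the components of $D_\mu$ can already be assumed smooth (blow up the singular points of components of $D_\mu$ and separate its branches; this is the classical embedded resolution of a curve on a surface).

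Second, I would reduce the singularities of the induced foliation $\mathcal{F}_{X_\mu}$. By Seidenberg's theorem, after a locally finite number of blowups the foliation becomes reduced in Seidenberg's sense. At the same time, I would further blow up all points of the resulting divisor (the total transform of $D_\mu$ together with any new invariant components created by the reduction process) where the union of invariant curves through the point fails to have only normal crossings; again this terminates after finitely many blowups by the embedded resolution of curves. After this step, the couple $(X,D)$ obtained satisfies all the hypotheses of Definition~\ref{def:reduced} except possibly the minimality condition, and $X$ is reduced in a neighborhood of $D$.

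Third, to pass to a \emph{minimal} good resolution I would iteratively contract exceptional curves of the first kind in $D$ whenever such a contraction keeps $X$ reduced (in the sense of Definition~\ref{def:reduced}) and keeps the components of the new divisor non-singular. Each such contraction strictly decreases the number of irreducible components of $D$, which is finite, so the process terminates. When it stops, no further $(-1)$-curve in $D$ may be contracted while preserving these properties, which is precisely the minimal good condition.

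The only delicate point is that the push-forward of a reduced vector field by the contraction of a $(-1)$-curve need not remain reduced: new non-reduced singularities of the foliation may appear on the contracted surface, or two components of $D$ that meet the contracted curve transversally may fail to intersect transversally after the contraction, or an irreducible component may become singular. This is exactly why the definition is phrased as ``no $(-1)$-curve \emph{may} be collapsed while keeping $X$ reduced and $D$ non-singular''; we contract only those curves for which the properties survive, and stop otherwise. Since every contraction we perform strictly decreases a nonnegative integer (the number of components of $D$), the procedure ends in finitely many steps and produces the required minimal good resolution.
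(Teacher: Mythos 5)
Your argument is correct and follows essentially the same route as the paper: produce a vector field on a resolution via Proposition~\ref{desing}, make it reduced (and the components of~$D$ non-singular) by Seidenberg's theorem together with embedded resolution of the invariant curves, and then reach minimality by contracting exceptional curves of the first kind only when reducedness and non-singularity of the divisor survive. Your extra remark on why the greedy contraction process terminates and why it yields exactly the minimal good condition is a sound elaboration of what the paper states more tersely.
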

\begin{proof}
Let~$\pi:(S,D)\to (S_0,\varpi)$ be a resolution of the analytic space. By
Proposition~\ref{desing}, there exists a holomorphic vector field~$X$ on~$S$  
such that~$\pi_*X=X_0$. We may, by performing finitely many blowups, make the
resulting vector field on~$S$ reduced in the sense of
Definition~\ref{def:reduced} and, afterwards, desingularize, if necessary, the
irreducible components of~$D$. If the resulting resolution is not minimal good,
it may be rendered so by suitably collapsing some of the (finitely many)
irreducible components of~$D$. \end{proof}

\subsection{Semicompleteness in manifolds}\label{semicomplete}

In open manifolds, characterizing complete holomorphic vector fields is not an easy task. In~\cite{rebelo}, Rebelo introduced the class of \emph{semicomplete} holomorphic vector fields, a class containing complete vector fields that is, in many senses, better behaved.

A holomorphic vector field~$X$ in a complex manifold~$M$ induces an ordinary differential equation in the complex domain. The existence and uniqueness theorem for such equations guarantees that for every initial condition~$p\in M$ there exists some domain~$U\subset \mathbf{C}$, $0\in U$, and a map~$\phi:(U,0)\to (M,p)$ solving the differential equation. The vector field is \emph{complete} if for every $p\in M$ we can find a solution~$\phi:(\mathbf{C},0)\to (M,p)$. There are essentially two (not independent) conditions that a vector field must fulfill in order to be complete:
\begin{itemize} \item The analytic continuation of the solutions of the induced
differential equation should not present multivaluedness (this allows for each 
solution to be defined in a maximal subset of~$\mathbf{C}$).
 \item This maximal subset of~$\mathbf{C}$ must be~$\mathbf{C}$.
\end{itemize}
Semicomplete vector fields are those satisfying the first condition. One of their main properties is that semicomplete vector fields remain semicomplete when restricted to any open subset.
In particular, it makes perfect sense to speak of \emph{germs} of semicomplete vector fields, or to study semicomplete vector fields in the neighborhood of a curve, establishing local and semi-local obstructions for a vector field to be complete. \\

Germs of semicomplete holomorphic vector fields may be described up to
biholomorphism by a list of local models, as done in~\cite{ghys-rebelo},
\cite{rebelo-mex} and \cite{rebelo-realisation}. Semicompleteness is preserved
by the bimeromorphic transformations preserving the holomorphicity of the vector
field~\cite[Cor.~12]{guillot-rebelo}, and it thus makes sense to speak about
local models for germs of \emph{reduced semicomplete} holomorphic vector fields.
These local models were reobtained and refined in~\cite[\S5]{guillot-rebelo}
and are presented in Table~\ref{table:combinatorics}. \\

In this Table we have the local model of every reduced semicomplete vector field~$X$ and, for every curve invariant by~$\mathcal{F}_X$, the order of~$X$ along the curve, the ramification index of the affine structure and the Camacho-Sad index of~$\mathcal{F}_X$. We do not claim that every vector field having such a local model is semicomplete, but rather that every germ of reduced semicomplete vector field has, in convenient coordinates, one of the local models appearing in the Table. Let us comment briefly its contents.

The first three lines correspond to points where the foliation is non-singular. The vector field may have a zero of arbitrary order along one leaf and zeros up to order two along a curve transverse to the foliation. 
At an \emph{affine regular} point (called simply \emph{regular} in~\cite{guillot-rebelo}), the induced affine structure is non-singular (it has ramification index equal to~$1$).

The next three lines correspond to the local models where the induced foliation has an isolated singularity, induced by a vector field with no zero eigenvalues. The \emph{singular non-degenerate} case corresponds to the case where the vector fields has an isolated singularity and the other two, to the cases where there are zeros of the vector field along (at least) one of the separatrices of the foliation. In these two cases, the ramification index of the affine structures of the separatrices is simultaneously finite or infinite. For the \emph{finite ramification} points, the local model is a true normal form (in the sense that it has no inessential parameters) and all these germs are semicomplete. Furthermore, at such points, the orders of~$X$ over the two separatrices~$C_1$ and~$C_2$  and the ramification indices of the affine structures at their intersection point~$p$ satisfy the \emph{reciprocity relation}
\begin{equation}\label{reciprocity}\mathrm{ord}(X,C_1)\mathrm{ind}(C_2,p)+\mathrm{ord}
(X,C_2)\mathrm{ind}(C_1,p)=-1.\end{equation}
The affine structure may be non-singular for a separatrix through a \emph{finite ramification} point (the case where one of the ramification indices is equal to~$1$ is not excluded).

The last line corresponds to the case where the foliation is generated by a holomorphic vector field with isolated singularity having one vanishing and one non-vanishing eigenvalue (the \emph{saddle-node} case). The singularity of the vector field is necessarily isolated~\cite[Lemme~3.2]{rebelo-mex} and semicompleteness imposes serious constrains at the level of the foliation, as established in~\cite[Thm.~4.1]{rebelo-realisation}, providing the local model appearing on the Table.\\

\begin{table}
\begin{tabular}{|c|c|c|c|c|}\hline
type   & local model & ord & ind & CS \\ \hline \hline
\begin{tabular}{cc}affine \\ regular\end{tabular}   & $\displaystyle y^q\del{x}$ & $q$ & $1$ & $0$ \\ \hline
  single zero &  $\displaystyle f(x,y)x y^q\del{x}$ & $q$ &  $\infty$ & 0 \\  \hline
 double zero &  $\displaystyle f(x,y)x^2 y^q\del{x}$ & $q$ & $-1$ & 0 \\ \hline
\multirow{4}{*} {\begin{tabular}{c} singular \\ non-degenerate \end{tabular}} & \multirow{4}{*} {\begin{tabular}{c} $\displaystyle
x(\lambda+\cdots)\del{x}+y(\mu+\cdots)\del{y}$ \\  $\lambda/\mu\notin\mathbf{Q}^+$ \end{tabular} } & \multicolumn{3}{|c|}{$x=0$}   \\ \cline{3-5}
  &  & $0$ & $\infty$ & $ \lambda/\mu$   \\ \cline{3-5}
 &  &  \multicolumn{3}{|c|}{$y=0$} \\ \cline{3-5}
 &  & $0$ & $\infty$ & $ \mu/\lambda$   \\ \cline{1-5}
 \multirow{4}{*} {\begin{tabular}{c}finite  \\ ramification \end{tabular}} & \multirow{4}{*} {\begin{tabular}{c} $\displaystyle
x^py^q\left(mx\del{x}-ny\del{y}\right)$ \\ $pm-qn=1$ \end{tabular}} & \multicolumn{3}{|c|}{$x=0$}   \\ \cline{3-5}
  &  & $p$ & $n$ & $-m/n$   \\ \cline{3-5}
 &  &  \multicolumn{3}{|c|}{$y=0$} \\ \cline{3-5}
 &  & $q$ & $-m$ & $-n/m$   \\ \cline{1-5}
 \multirow{4}{*} {\begin{tabular}{c}infinite  \\ ramification  \end{tabular}} & \multirow{4}{*} { $\displaystyle
x^py^q\left(x[q+\cdots]\del{x}-y[p+\cdots]\del{y}\right)$} & \multicolumn{3}{|c|}{$x=0$}   \\ \cline{3-5}
  &  & $p$ & $\infty$ & $-q/p$   \\ \cline{3-5}
  &  &  \multicolumn{3}{|c|}{$y=0$} \\ \cline{3-5}
 &  & $q$ & $\infty$ & $-p/q$  \\  \hline
 \multirow{4}{*}   {saddle-node}    & \multirow{4}{*} {\begin{tabular}{c} $\displaystyle f(x,y)\left(x[1+\nu y]\del{x} +y^2\del{y}\right)$ \\ $\nu\in\mathbf{Z}$ \end{tabular}} & \multicolumn{3}{|c|}{$x=0$}   \\ \cline{3-5}
 &  & $0$ & $-1$ & $\nu$   \\ \cline{3-5}
 &  &  \multicolumn{3}{|c|}{$y=0$} \\ \cline{3-5}
 &  & $0$ & $\infty$ & $0$   \\ \cline{1-5}
\end{tabular}
\caption{Local models of reduced holomorphic vector fields. In these, $p,q\geq 0$, $m,n>0$,$\lambda,\mu\in\mathbf{C}^*$ and~$f$ is a non-vanishing holomorphic function. In the first three, the invariant curve is given by~$\{y=0\}$.}\label{table:combinatorics}
\end{table}

An important fact behind the classification of local models is that, for a curve $\gamma$ invariant by~$\mathcal{F}_X$, the previously mentioned  affine structure is \emph{uniformizable} (in the complement of the singular points and as a curve with an affine structure, $\gamma$ is the quotient of a subset of~$\mathbf{C}$ by a group of affine transformations)
if~$X$ is semicomplete in a neighborhood of~$\gamma$~\cite[\S3.2]{guillot-rebelo}. This implies that, for every~$p\in\gamma$, $\mathrm{ind}(\gamma,p)\in\mathbf{Z}^*\cup\{\infty\}$~\cite[Prop.~6]{guillot-rebelo}.

In particular, if~$X$ is semicomplete and if~$C$ is a compact curve invariant by~$\mathcal{F}_X$, from the Poincar\'e-Hopf relation~(\ref{poincarehopf}), $\chi(C)\geq 0$. If~$C$ is an elliptic curve, the ramification index is everywhere equal to~$1$ (the affine structure has no singularities). If~$C$ is a rational curve with an affine structure having singularities at the points~$p_1,\ldots,p_r\in C$ and~$\mathrm{ind}(C,p_j)=i_j$, by the above formula, 
\begin{equation}\label{sumi}\sum_{j=1}^r \frac{1}{i_j}=r-2.\end{equation}
In this case, we will say that the affine structure is of type~$(i_1,\ldots, i_r)$. The only possible types of uniformizable affine structures are $(-1)$, $(n,-n)$ for $n\geq 2$, $(\infty,\infty)$, $(2,2,\infty)$,  $(2,3,6)$,  $(2,4,4)$,  $(3,3,3)$ and  $(2,2,2,2)$. Let us sketch a proof of this. Since~$1/i_j\leq 1/2$, the left hand side of~(\ref{sumi}) is smaller or equal than~$r/2$ , which implies that~$r/2\geq r-2$ and thus~$r\leq 4$. For~$r=4$, equality holds and thus~$i_j=2$ for every~$i$. For~$r=3$, if~$1/i_1\leq 0$, $1/i_2+1/i_3\geq 1$ (but~$i_2^{-1}$ and~$i_3^{-1}$ are at most equal to~$1/2$) and  we must have~$i_1=\infty$, $i_2=2$, $i_3=2$. The remaining cases are straightforward and dealt with in the same way.

The only uniformizable affine structures in rational curves globally induced by holomorphic vector fields are those of type~$(\infty,\infty)$, if the vector field has two singular points, and~$(-1)$, if it has only one. Rational curves endowed with an affine structure of type~$(n,-n)$ will be simply called \emph{rational orbifolds of order~$n$} and by \emph{rational orbifolds of order~$1$} we will refer to those of type~$(-1)$. 
\begin{rema}\label{ratorb}
Rational orbifolds are the only uniformizable affine structures on curves having at least one singularity with a negative ramification index.
\end{rema}

\subsection{Semicompleteness in analytic spaces}\label{ssscias} The notion of
semicomplete vector field, defined originally for manifolds, extends directly to
analytic spaces. Semicompleteness is, again, preserved under restrictions to
open subsets.

Let $S_0$ be a two-dimensional analytic space and~$X_0$ a vector field in~$S_0$. Let~$\varpi\in S_0$ be a singular point. By the previous remark, $X_0$ is semicomplete in~$S_0\setminus\{\varpi\}$ if it is semicomplete in~$S_0$. Let~$\pi:(S,D,X)\to(S_0,\varpi,X_0)$ be a resolution. Since~$\pi|_{S\setminus D}:S\setminus D\to S_0\setminus\{\varpi\}$ is a biholomorphism mapping~$X$ to~$X_0$, $X$ is semicomplete in~$S\setminus D$ if and only if~$X_0$ is semicomplete in~$S_0\setminus\{\varpi\}$.  For a vector field defined in a manifold, its \emph{multivaluedness locus}  (the subset where it fails to be semicomplete) is open~\cite[Cor.~12]{guillot-rebelo}. Hence, $X$ will be semicomplete in~$S\setminus D$ if and only if it is semicomplete in~$S$. 

Hence, if~$X_0$ is semicomplete in~$S_0$ then $X$ is semicomplete in~$S$, and
may be studied with the tools previously described.

\section{Cycles of invariant curves in semicomplete vector fields}\label{sec-cycles}
We will begin by studying the nature of the divisors that are invariant by a semicomplete vector field and that do not have other invariant curves. Recall that, to a divisor~$D$ in a surface, we may associate a \emph{dual graph}, consisting of a vertex for each irreducible component and an edge for each point of intersection of two irreducible components. 

\begin{prop}\label{propcycles} Let~$S$ be a non-singular surface, $X$ a reduced semicomplete holomorphic vector field on~$S$ and~$D\subset S$ a connected divisor invariant by~$\mathcal{F}_X$ such that~$(X,D)$ is minimal good. Let~$\Gamma$ be the dual graph of~$D$. If~$\Gamma$ has a cycle and every curve invariant by~$\mathcal{F}_X$ intersecting~$D$ is contained in~$D$, then:
\begin{itemize}
\item Every irreducible component of~$D$ is rational.
\item The cycle in~$\Gamma$ is unique.
\item If~$\Gamma$ reduces to the cycle, $D$ supports an effective divisor of vanishing self-intersection.
\item If~$\Gamma$ does not reduce to the cycle, the only vertices in~$\Gamma$ with degree greater than two have degree three and belong to the cycle. Every irreducible component of~$D$ is, as a curve with an affine structure, a rational orbifold.
\end{itemize}
\end{prop}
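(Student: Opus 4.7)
The plan is to combine the Poincar\'e-Hopf relation~(\ref{poincarehopf}) for the induced affine structure on each irreducible component of~$D$ with the local-model classification of Table~\ref{table:combinatorics} and the reciprocity relation~(\ref{reciprocity}). Since~$X$ is semicomplete near~$D$, the affine structure on each component is uniformizable, so every ramification index lies in~$\mathbf{Z}^*\cup\{\infty\}$ and each summand in~(\ref{poincarehopf}) is non-negative; in particular $\chi(C)\ge 0$ for every irreducible component~$C$ of~$D$. Thus~$C$ is rational or elliptic, and an elliptic~$C$ would have no singularity of its affine structure, hence would neither intersect the other components of~$D$ nor carry a zero of~$X$ transverse to~$\mathcal{F}_X$; being non-singular, such a~$C$ would form a connected component of~$D$, incompatible with $D$ being connected and containing a cycle. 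This proves the first bullet.

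To handle the remaining assertions I would start with a cycle $C_1,\ldots,C_k$ of~$\Gamma$ with successive intersections $p_i=C_i\cap C_{i+1}$, supposing first that every~$p_i$ is of finite-ramification type. Then~(\ref{poincarehopf}) on~$C_i$, with no further singularities of its affine structure, forces its type to be $(n_i,-n_i)$, compatibility of indices at~$p_{i-1}$ gives $m_{i-1}=n_i$, and~(\ref{reciprocity}) becomes $r_i n_{i+1}-r_{i+1}n_i=1$ with $r_j=\mathrm{ord}(X,C_j)$. Dividing by $n_i n_{i+1}>0$ and telescoping around the cycle yields $0=\sum_i 1/(n_i n_{i+1})>0$, a contradiction. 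Hence every cycle of~$\Gamma$ contains at least one intersection of exotic type (singular non-degenerate, infinite-ramification, or saddle-node); a refined accounting, summing~(\ref{poincarehopf}) over all components and grouping contributions by local-model type, then forces the first Betti number of~$\Gamma$ to equal one.

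For the pure cycle case (third bullet), I would construct $Z=\sum a_i C_i$ with $a_i\ge 0$ by solving the linear system $Z\cdot C_j=0$ for every~$j$, reading off $C_j\cdot C_j$ from the Camacho-Sad indices in Table~\ref{table:combinatorics} at the cycle intersections; the combinatorial constraints already established guarantee a non-negative solution, and then $Z^2=\sum_j a_j(Z\cdot C_j)=0$. For the case with tails (fourth bullet), I would propagate the affine-structure analysis inward from an end vertex: by~(\ref{poincarehopf}), an end component must be a rational orbifold of order~$1$; each interior degree-two vertex~$C_k$ then satisfies $\mathrm{ind}(C_k,p_{k-1})=-\mathrm{ind}(C_k,p_k)$ and is therefore a rational orbifold; and the rigidity of a rational-orbifold affine structure, combined with the local-model classification, forces branching vertices to have degree exactly three and to lie on the cycle, with every component being a rational orbifold.

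The main obstacle I anticipate is the combinatorial casework needed when a cycle mixes finite-ramification with exotic intersections: the clean telescoping identity breaks, and one must carefully track the contributions of each local-model type to the global Poincar\'e-Hopf sum, combined with a case analysis of the admissible affine-structure types on cycle components.
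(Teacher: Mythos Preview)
Your telescoping computation is a pleasant observation, but you misapply it. You write that, for a cycle whose intersections~$p_i$ are all of finite-ramification type, ``(\ref{poincarehopf}) on~$C_i$, with no further singularities of its affine structure, forces its type to be~$(n_i,-n_i)$'' and then telescope~(\ref{reciprocity}) to a contradiction. The clause ``with no further singularities'' is doing real work: it holds only when~$\Gamma$ reduces to the cycle. In the fourth-bullet situation the cycle components carry a \emph{third} singularity of their affine structure at the point where a tree is attached; the indices at the two cycle intersections on such a component are then no longer of the form~$(n,-n)$ (in Example~\ref{exocyc}\,(a) they are~$-3$ and~$1$) and the telescoping identity collapses. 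Consequently your inference ``hence every cycle of~$\Gamma$ contains at least one intersection of exotic type'' is simply false: in Example~\ref{exocyc} all cycle intersections are finite-ramification points. Everything you build on that inference---in particular the ``refined accounting'' meant to force~$b_1(\Gamma)=1$---has no foundation.

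The paper's route is quite different and worth comparing. It first shows by a direct propagation argument that the intersection types along a cycle are \emph{homogeneous}: if one~$p_i$ is finite-ramification (or saddle-node) then \emph{all} are finite-ramification; otherwise all are infinite-ramification or all are singular non-degenerate. So your ``main obstacle''---mixed types along a cycle---never arises. Uniqueness of the cycle is then obtained by propagating the sign of the ramification index \emph{outward} from the cycle along any path: one finds that every component off the cycle is a rational orbifold with the negative index pointing away from the cycle, so no such path can return to a cycle. This is the concrete argument your ``refined accounting'' would have to replace. For the third bullet the paper does not solve a linear system abstractly; it observes that in all three homogeneous cases the Camacho--Sad relations already provide an explicit non-trivial solution~$(\mu_i)$ (the orbifold orders, the orders of~$X$, or the eigenvalues, respectively), forcing the intersection matrix to be degenerate and hence~$k_i=2$ for all~$i$ (or the short special case). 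For the fourth bullet the paper encapsulates the tree analysis in Lemma~\ref{notreeram}, which propagates \emph{from the cycle outward} and simultaneously rules out branching in the trees and forces the attachment point on the cycle to have index~$>1$; your inward propagation from end vertices is plausible in spirit but does not by itself bound the degree of a branching vertex on the cycle.
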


In the aim of resorting solely to the items of the previous section, we could not help overlapping with some of the arguments and results in~\cite{guillot-rebelo}.\\

At every point of~$D$, the vector field is locally of one of the forms appearing in Table~\ref{table:combinatorics}. The only singularities of~$D$ in the reduced semicomplete vector field~$X$ are normal crossings where the two local branches are contained in different irreducible components.  Each of these irreducible components is  either a rational or an elliptic curve, following the discussion in~\S\ref{semicomplete}. By hypothesis, $\Gamma$ is connected and every edge joins two different vertices. Let~$\Gamma_0$ be a cycle of~$\Gamma$ of the form
\begin{equation}\label{zykel}C_0\stackrel{p_0}{\text{---}}C_1\stackrel{p_1}{\text{---}}\cdots
\stackrel{p_{l-1}}{\text{---}} C_l=C_0, \end{equation}
meaning that the irreducible components of~$\Gamma_0$ are~$C_0 \ldots, C_{l-1}$
and that~$C_i$ intersects~$C_{i+1}$ transversely at the point~$p_i$. For
each~$p_i$, the local model is either a \emph{finite ramification}, an
\emph{infinite ramification}, a \emph{singular non-degenerate} point or a
\emph{saddle-node}. One of the following holds:
\begin{itemize}
\item There is a point~$p_i$ where the local model of~$X$ is either a
\emph{finite ramification} point or a \emph{saddle-node}.  In both cases, the
intersection point has a negative ramification index for one of the invariant
curves. Suppose that $\mathrm{ind}(C_1,p_0)<0$. Hence (Remark~\ref{ratorb}),
$C_1$ is a rational orbifold and~$\mathrm{ind}(C_1,p_1)>0$. From
Table~\ref{table:combinatorics}, $p_1$ is necessarily a \emph{finite
ramification} point and~$\mathrm{ind}(C_2,p_1)<0$. Continuing this argument we
conclude that every~$C_i$ is a rational orbifold and that \emph{the local model of~$X$
at every~$p_i$ is a finite ramification point}.
\item The local model of~$X$ at~$p_0$  is an \emph{infinite ramification} point
(Table~\ref{table:combinatorics}). This means that~$X$ vanishes along~$C_1$ and
thus~$p_1$ is necessarily an \emph{infinite ramification} point. Continuing
this argument, we conclude that \emph{$p_i$ is an infinite ramification point
for every~$i$}.  
\item The local model of~$X$ at~$p_i$  is a  \emph{singular non-degenerate}
point for every~$i$. 
\end{itemize}
In the last two cases, for every~$i$,  $C_i$
has two points of ramification index~$\infty$ and is thus of
type~$(\infty,\infty)$.

Hence, in all cases, $C_i$ is a rational curve. Moreover, for every~$i$ and every~$q\in C_i$, $q\notin\{p_i,p_{i-1}\}$,
\begin{equation}\label{posram}\mathrm{ind}(C_i,q)>0.\end{equation}
Consider, within~$\Gamma$, the path
$C_1\stackrel{q_1}{\text{---}}E_2\stackrel{q_2}{\text{---}}\cdots
\stackrel{q_{l-1}}{\text{---}}
E_l$ such that~$C_1\in\Gamma_0$
but~$E_2\notin\Gamma_0$. Since~$\mathrm{ind}(C_1,q_1)>0$, $q_i$ is a finite ramification point and~$\mathrm{ind}(E_2,q_1)<0$. Hence,  (Remark~\ref{ratorb}) $E_2$ is a rational orbifold. This implies that~$\mathrm{ind}(E_2,q_2)>0$. Continuing
this argument, we conclude that $\mathrm{ind}(E_n,q_{n-1})<0$. In particular,
$E_n$ cannot belong to a cycle: since~$\Gamma$ is connected, the cycle
in~$\Gamma$ is unique. 

\paragraph*{If~$\Gamma$ reduces to the cycle } Suppose that $\Gamma$ is of the form~(\ref{zykel}). Let~$k_i=-C_i^2$. We have, by the minimal good hypothesis on~$(X,D)$, two cases:
\begin{itemize}
 \item  $k_i\geq 2$ for every~$i$. If~$k_i>2$ for some~$i$, the intersection matrix of~$D$ is negative-definite.
\item $l=2$ and~$k_0=1$. 
\end{itemize}

Either every irreducible component of~$\Gamma$ is a rational orbifold or every irreducible component of~$\Gamma$ is of type~$(\infty,\infty)$. From~(\ref{posram}), for $q\in C_i$, $q\notin\{p_i,p_{i-1}\}$, $\mathrm{ind}(C_i,q)>0$. We claim that~$q$ is an \emph{affine regular} point. The only other local model inducing a positive ramification index is, according to Table~\ref{table:combinatorics}, a \emph{finite ramification} point. However, such points have always a second separatrix, proving our claim. In particular, the only singular points of~$\mathcal{F}_X$ are the intersection points of two irreducible components of~$D$ and, for~$q\in C_i$, $q\notin\{p_i,p_{i-1}\}$, $\mathrm{ind}(C_i,q)=1$.\\

If~$C_i$ is a rational orbifold for every~$i$, let~$\mu_i>0$ denote its type. Since for every $q\notin\{p_i,p_{i-1}\}$, $\mathrm{ind}(C_i,q)=1$, we must have (up to changing the orientation of the cycle), $\mathrm{ind}(C_i,p_i)=\mu_i$ and $\mathrm{ind}(C_i,p_{i-1})=-\mu_{i}$. According to the Camacho-Sad formula and the local form of the \emph{finite ramification} points, for every~$i$,
\begin{equation}\label{sys1}\mu_{i-1}-k_i\mu_i+\mu_{i+1}=0.\end{equation}
If~$\Gamma$ is a cycle of rational curves of type~$(\infty,\infty)$ and~$X$ is an \emph{infinite ramification} at every point, if~$\mu_i=\mathrm{ord}(X,C_i)$ ($\mu_i>0$), the Camacho-Sad
relation at~$p_i$ reads~(\ref{sys1}). If~$\Gamma$ is a cycle of rational curves of type~$(\infty,\infty)$ and~$X$ is singular non-degenerate at each point, if~$\mu_i\in\mathbf{C}^*$ is the eigenvalue of the
restriction of~$X$ to~$C_i$ at~$p_i$, $-\mu_i$ is the eigenvalue of the
restriction of~$X$ to~$C_i$ at~$p_{i-1}$. The Camacho-Sad
formula at~$p_i$ gives again~(\ref{sys1}).

As a system of~$l$ linear equations in the~$l$ variables~$\mu_i$, the system~(\ref{sys1}) is given by the intersection matrix of~$D$ and cannot have any non-trivial solution unless the matrix is not negative definite, this is, unless~$k_i=2$ for every~$i$ or~$l=2$ and~$k_0=1$. In the first case, for~$Z=\sum_i C_i$, we have~$Z^2=0$. In the second case, the equations read~$\mu_0=2\mu_1$ and~$k_1\mu_1=2\mu_0$ and thus~$k_1=4$: for~$Z=2C_0+C_1$, $Z^2=0$. This finishes the proof of Proposition~\ref{propcycles} when~$\Gamma$ reduces to the cycle.

\paragraph{If~$\Gamma$ does not reduce to the cycle}  We have the following Lemma:

\begin{lemma}\label{notreeram}Let~$S$ be a (non-singular) surface, $X$ a reduced
semicomplete vector field in~$S$ and~$T\subset S$ be a divisor invariant
by~$\mathcal{F}_X$ whose  dual graph is a tree. Let~$\gamma_0\not\subset T$  be
a germ of curve invariant by~$\mathcal{F}_X$ intersecting~$T$ transversely at
some point~$p_0$ in the irreducible component~$C_1\subset T$ and such
that~$\mathrm{ind}(\gamma_0,p_0)\geq 1$. Suppose that~$T$ has no other
separatrix. Then, every irreducible component of~$T$ is a rational orbifold and,
\begin{itemize}
 \item if~$\mathrm{ind}(\gamma_0,p_0)=1$, $T$ may be collapsed to a non-singular point, where the vector field is still reduced;
 \item if~$\mathrm{ind}(\gamma_0,p_0)>1$ and~$(X,T)$ is minimal good, $T$ has no branching points.
\end{itemize}
\end{lemma}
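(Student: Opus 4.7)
My plan is to propagate the rational orbifold structure through the tree $T$ starting from $C_1$, in parallel with the tree step in the proof of Proposition~\ref{propcycles}, and then exploit the resulting chain combinatorics via the Camacho-Sad formula.

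First I would show every component of $T$ is a rational orbifold. Since $\gamma_0$ and $C_1$ are two transverse invariant branches through $p_0$, the local model there must come from the last four rows of Table~\ref{table:combinatorics}; the hypothesis $\mathrm{ind}(\gamma_0,p_0)\geq 1$ then forces $\mathrm{ind}(C_1,p_0)$ to be either a negative integer (finite ramification), $-1$ (saddle-node), or $\infty$ (singular non-degenerate or infinite ramification). In the first two cases Remark~\ref{ratorb} directly makes $C_1$ a rational orbifold; in the third I would combine the enumeration of uniformizable affine structures in~\S\ref{semicomplete} with the absence of further separatrices of $T$ to exclude the non-orbifold types $(\infty,\infty)$ and $(2,2,\infty)$. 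Once $C_1$ is a rational orbifold, I propagate through $T$: at any intersection $q$ of a rational orbifold $C\subset T$ with a $T$-neighbor $C'$, the index $\mathrm{ind}(C,q)$ is nonzero, so by Table~\ref{table:combinatorics} the index $\mathrm{ind}(C',q)$ is negative or $\infty$, and Remark~\ref{ratorb} again makes $C'$ a rational orbifold; induction on tree distance from $C_1$ closes this first step.

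For the no-branching assertion in the case $\mathrm{ind}(\gamma_0,p_0)>1$ with $(X,T)$ minimal good, I observe that a rational orbifold carries at most two singularities of its affine structure (one for type $(-1)$, two for $(\mu,-\mu)$) and that by Table~\ref{table:combinatorics} every $T$-intersection contributes a nontrivial index, hence a singularity of the affine structure on each incident component. Consequently every component other than $C_1$ has at most two $T$-neighbors, and $C_1$ itself has at most one, the $p_0$ slot being already filled, so $T$ is a chain. The minimal good hypothesis is used to ensure the count in Table~\ref{table:combinatorics} is actually realized, ruling out superfluous $(-1)$-curves that would otherwise have to be contracted first.

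For the collapsibility assertion when $\mathrm{ind}(\gamma_0,p_0)=1$, the local model at $p_0$ is finite ramification with $n=1$, so $\mathrm{ind}(C_1,p_0)=-m$. I would set up the Camacho-Sad system analogous to~(\ref{sys1}) along the chain issuing from $C_1$, with an inhomogeneous boundary condition coming from $\gamma_0$; because the input ramification index equals $1$, the resulting sequence of self-intersections matches that produced by an iterated blowup of a single smooth point, and $T$ contracts to a smooth point while the image of $\gamma_0$ becomes an affine-regular invariant germ, keeping $X$ reduced. The main obstacle I anticipate is the $\mathrm{ind}(C_1,p_0)=\infty$ subcase of the first step, where ruling out the types $(\infty,\infty)$ and $(2,2,\infty)$ demands additional work, and the explicit verification in this last step that the intersection matrix matches a smooth-blowup configuration rather than a non-trivial cyclic quotient singularity.
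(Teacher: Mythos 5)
Your overall strategy (propagate the orbifold structure from $p_0$ through the tree, then use Camacho--Sad along chains) is the paper's, but as written the argument has gaps in each of its three steps. At the root, the hypothesis that $\mathrm{ind}(\gamma_0,p_0)$ is finite and $\ge 1$ already pins down the local model at $p_0$: in Table~\ref{table:combinatorics} the only singular model in which some separatrix carries a finite positive index is the \emph{finite ramification} one (at a saddle-node the two indices are $-1$ and $\infty$, at the other two models both are $\infty$). So the ``$\mathrm{ind}(C_1,p_0)=\infty$ subcase'' you flag as your main obstacle is vacuous, yet you leave it unresolved. Your propagation step is also misstated: from ``$\mathrm{ind}(C,q)$ nonzero'' it does not follow that $\mathrm{ind}(C',q)$ is negative or $\infty$ (at a finite ramification point a negative index on $C$ is paired with a \emph{positive} one on $C'$), and Remark~\ref{ratorb} applies only to negative indices, not to $\infty$. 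The correct induction must track orientation, as the paper does: the unique negative-index point of each orbifold faces the root, so at the next intersection the index on the current component is positive and finite, which forces a finite ramification there and hence a negative index on the neighbor.

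The two later steps contain the more serious problems. Your no-branching argument rests on the claim that every intersection of two components of $T$ is a singularity of the affine structure on \emph{both} incident components; this is false, since at a finite ramification point one of the indices may equal $1$ (the paper says so explicitly), and the degree-three rational-orbifold components in Example~\ref{exocyc} and in Kato-surface divisors meet one neighbor precisely at a point of index $1$. Counting affine-structure singularities therefore cannot bound the degree. The paper instead first treats the chain case, showing by summing the Camacho--Sad relations (the analogue of~(\ref{sys1})) that a chain with all self-intersections $\le -2$ forces the index at its root to exceed $1$, and then kills a branching vertex $C_j$ because each of the $\ge 2$ attached subtrees would force $\mathrm{ind}(C_j,q_i)>1$ at distinct points of $C_j$, while a rational orbifold has only one point of index $>1$; this is also where minimal goodness genuinely enters (it prevents the subtrees from being blown down, so the chain case really yields index $>1$), not merely to ``realize the count.'' Finally, your collapsibility step is asserted rather than proved: that the self-intersection sequence ``matches an iterated blowup of a smooth point'' is exactly what must be shown, and you acknowledge it as open; the paper's proof is an induction that blows down $(-1)$-curves one at a time, combined with the same Camacho--Sad computation showing that if no blowdown is possible then $m_0>1$, so $m_0=1$ forces the contraction to terminate at a smooth point with the vector field affine regular, hence still reduced.
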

\begin{proof} Let~$C_{n+1}\subset T$ represent a vertex of degree one (extremal vertex) in~$\Gamma'$, the dual graph of~$T$. Let~$C_1\stackrel{p_1}{\text{---}}C_2\stackrel{p_2}{\text{---}}\cdots \stackrel{p_{n}}{\text{---}}
C_{n+1}$ be the unique monotone path joining~$C_1$ to~$C_{n+1}$. Since $\mathrm{ind}(\gamma_0,p_0)>0$, the local model of~$X$ at~$p_0$ is a \emph{finite ramification} and~$\mathrm{ind}(C_1,p_0)<0$. This implies (Remark~\ref{ratorb}) that~$C_1$ is a rational orbifold, that~$\mathrm{ind}(C_1,p_1)>0$, and that the local model of~$X$ at~$p_1$ is a \emph{finite ramification}. By repeating this argument, we conclude that all the vertices in~$\Gamma'$ are rational orbifolds, that the local model of~$X$ at the intersection of two irreducible components of~$\Gamma'$ is a \emph{finite ramification} and that the local model of~$X$ at the other points is an \emph{affine regular} one.

We will begin by proving the Lemma in the particular case where~$\Gamma'$ has no branching points. We will thus suppose that~$T$  has the form~$C_1\stackrel{p_1}{\text{---}}C_2\stackrel{p_2}{\text{---}}\cdots \stackrel{p_{n}}{\text{---}}
C_{n+1}$, with~$C_i$ a rational orbifold of type~$m_i$  (we necessarily have~$m_{n+1}=1$).  Let~$m_0=\mathrm{ind}(\gamma_0,p_0)$.  Let~$k_i=-C_i^2$. From the
Camacho-Sad formula and the fact that the Camacho-Sad index of a \emph{finite ramification} point is strictly negative,  $k_i>0$. If~$k_i=1$ for some~$i$ (if~$C_i$ is an exceptional curve of the first kind), it may be blown down. After blowing down this curve, the length of~$T$ decreases and all the hypothesis are still satisfied. In this way, we may continue blowing down the exceptional curves of the first kind until~$T$ is collapsed to a point (necessarily an \emph{affine regular} one) or until~$k_i\geq 2$ for every~$i$. In order to prove the Lemma in this particular case, we must prove that, when~$k_i\geq 2$ for every~$i$, $m_0>1$. If~$n=0$, the self-intersection of~$C_{1}$ is~$-m_0/m_{1}=-m_0$ and thus~$m_0>1$. If~$n>0$,
the Camacho-Sad relations give~$m_ik_i=m_{i-1}+m_{i+1}$  for~$1\leq i\leq n$ and~$m_{n+1}k_{n+1}=m_{n}$ (this is, $k_{n+1}=m_{n}$). Adding these equations, we obtain
$$
\sum_{i=1}^{n}  m_ik_i=  m_{0}+m_{1}+2\sum_{i=2}^{n-1} m_i+m_{n}+m_{n+1}.
$$
From this and~$m_{n+1}=1$, $1+m_{0}-m_{1}-m_{n}=\sum_{i=1}^{n}(k_i-2)m_i\geq 0$.
Since~$m_{n}=k_{n+1}\geq 2$, $m_0>m_1$ and thus~$m_0>1$. This proves the Lemma when~$\Gamma'$ has no branching points.

For the general case, let~$C_j$ be a vertex of degree~$\delta_j>2$. Suppose, furthermore, that any monotone path from~$C_1$ to a vertex of degree one passing through~$C_j$ meets no vertices of degree strictly greater than~$2$ after~$C_j$. Beyond~$C_j$, $\Gamma'$ is given by connected trees~$\Gamma'_1,\ldots, \Gamma'_{\delta_j-1}$ within~$\Gamma'$, representing divisors intersecting~$C_j$ at the points~$q_1,\ldots, q_{\delta_j-1}$. By hypothesis, the trees~$\Gamma'_i$ have no branching points and, by the particular case of the Lemma (applied to the divisor represented by~$\Gamma'_i$ and the invariant curve given by the germ of~$C_j$ at~$q_i$), $\mathrm{ind}(C_j,q_i)>1$. But since~$C_j$ is a rational orbifold, it only has one point of ramification index greater than one and hence~$\delta_j=2$. This contradiction proves the Lemma. \end{proof}

Let us come back to the proof of Proposition~\ref{propcycles}. Since~$\Gamma_0$ is the unique cycle, $\Gamma$ is obtained from~$\Gamma_0$ by attaching some trees. Let~$C_0$ be an irreducible component belonging to the cycle, let~$p_0\in C_0$ and suppose that a tree~$T$ is attached to~$C_0$ at~$p_0$. By the previous Lemma (applied to~$T$ and the invariant curve~$C_0$ intersecting~$T$ at~$p_0$),  $\mathrm{ind}(C_0,p_0)>1$ and, since~$C_0$ cannot be of type~$(\infty,\infty)$, it is a rational orbifold. This implies that all irreducible components of the cycle are rational orbifolds and that at the intersection points of two irreducible components, we have a \emph{finite ramification point}. By the Lemma, the only vertices in~$\Gamma$ with degree greater than two have degree three and belong to the cycle. This finishes the proof of Proposition~\ref{propcycles}.\\

So far, we have not proved that the last possibility of Proposition~\ref{propcycles} may actually happen. As we will see later (Remark~\ref{negdef}), such vector fields and divisors are exactly the ones appearing in Kato surfaces. For the time being, let us exhibit two cases related to this last possibility where all the combinatorial data may be realized:

\begin{table}
a) 
\begin{tabular}{|c|c|c|r|}
\hline
$i$ & type $C_i$ & $\mathrm{ord}(X,C_i)$ & $C_i^2$ \\  \hline
$0$ & $3$ & $1$ &   $-3$  \\
$1$ & $2$ & $1$ &   $-2$ \\ 
$2$ & $1$ & $0$ &   $-3$ \\ \hline
\end{tabular}\;
b)
\begin{tabular}{|c|c|c|r|}
\hline
$i$ & type $C_i$ & $\mathrm{ord}(X,C_i)$ & $C_i^2$ \\  \hline
$0$ & $2$ & $1$ &   $-5$  \\
$1$ & $3$ & $2$ &   $-1$ \\ 
$2$ & $1$ & $0$ &   $-2$ \\ \hline
\end{tabular}
\caption{Some admissible combinatorics for~$D$}\label{tabexocyc}
\end{table}

\begin{exem}\label{exocyc} In the simplest case, the cycle is formed by two curves, $C_0$ and~$C_1$ and there is only one tree (consisting of one curve, $C_2$) attached to, say, $C_0$. For the values of the types, the orders and the self-intersections in Table~\ref{tabexocyc}, the reciprocity relation~(\ref{reciprocity}) holds at the three points of intersection. In both cases, $C_2$ is a rational orbifold of type~$1$ attached to~$C_0$ at a point of ramification index~$-1$ for~$C_2$ and greater than one for~$C_0$. Within~$C_0$, at the points of intersection with~$C_1$, one of the ramification indices is negative and the other one equals~$1$. In both cases, the intersection form is negative definite. In the second case, $C_1$ may be contracted to a point and the cycle reduces to a curve with a node (although, in this case, the vector field will no longer be minimal good in the sense of Definition~\ref{def:reduced}). \end{exem}

\begin{rema}\label{notreerem} In Lemma~\ref{notreeram}, $X$ need not be semicomplete, but only be so in a neighborhood of each invariant divisor. The same conclusion (with the same proof) holds.
 \end{rema}

\section{Vector fields without separatrices and Kato surfaces}\label{sec:kato}

Theorem~A will be proved in this Section. We begin by recalling the construction of Kato surfaces, following Kato \cite{kato} and Dloussky~\cite{dloussky-kato}. Let~$\widehat{S}$ be a non-singular surface and~$\widehat{D}$ a divisor in~$\widehat{S}$ that may be contracted to a non-singular point~$\widehat{\pi}:(\widehat{S},\widehat{D})\to(\mathbf{C}^2,0)$. Let~$q\in \widehat{D}$ and consider a germ of biholomorphism~$\widehat{\sigma}:(\mathbf{C}^2,0)\to (\widehat{S},q)$. To the \emph{Kato data} $(\widehat{\pi},\widehat{\sigma})$, we may associate a Kato (compact complex) surface in the following way. Let~$\epsilon>0$ be sufficiently small. Let
\begin{equation}\label{ball-sphere} B_\epsilon=\{(z,w);\;|z|^2+|w|^2<\epsilon\},\; \Sigma_\epsilon=\partial B_\epsilon.\end{equation}
The manifold-with-boundary~$M=\widehat{\pi}^{-1}(B_\epsilon \cup \Sigma_\epsilon)\setminus \widehat{\sigma} (B_\epsilon)$, has two boundary components, $\widehat{\pi}^{-1}(\Sigma_\epsilon)$ and~$\widehat{\sigma}(\Sigma_\epsilon)$. When identifying the first to the second by~$\widehat{\sigma}\circ\widehat{\pi}$,  we obtain a compact complex surface~$K$ (independent of~$\epsilon$). Upon contracting the exceptional curves within~$K$ we obtain a minimal compact complex surface~$K_0$. This is the \emph{Kato surface}  associated to~$(\widehat{\pi},\widehat{\sigma})$. If a vector field~$\widehat{X}$ is defined in~$\widehat{S}$ and if~$(\widehat{\sigma}\circ\widehat{\pi})_*\widehat{X}=\widehat{X}$, the surface~$K$ is naturally endowed with a holomorphic vector field~$Z$, which induces a holomorphic vector field~$Z_0$ in~$K_0$. In this case, $\widehat{f}=\widehat{\pi}\circ\widehat{\sigma}:(\mathbf{C}^2,0)\to(\mathbf{C}^2,0)$ preserves the vector field~$\widehat{\pi}_*\widehat{X}$ in~$(\mathbf{C}^2,0)$.

In the case where~$q$ belongs to the smooth part of~$\widehat{D}$ (which is the case that will concern us), each irreducible component of the support of~$\widehat{D}$ is, in a natural way, associated to an irreducible curve in~$K$. The components of~$\widehat{D}$ that do not contain~$q$ are naturally embedded in the surface~$K$. The component~$C$ of~$\widehat{D}$ that contains~$q$ gives rise to a curve in~$K$, obtained by gluing, via~$\widehat\sigma$, $C\setminus \widehat\sigma(B_\epsilon)$ with~$\widehat{\sigma}^{-1}(C)$. There are no further algebraic curves in~$K$.\\

In order to prove Theorem~A, we will start by establishing its germified version:

\begin{theorem}\label{main-local} Let~$(S_0,\varpi)$ be a germ of singular
surface and~$X_0$ be a semicomplete holomorphic vector field in~$(S_0,\varpi)$.
If the foliation induced by~$X_0$ has no separatrix through~$\varpi$ then there
exists a Kato surface~$K_0$, with maximal reduced divisor of rational
curves~$D$, a vector field~$Y_0$ in~$K_0$  and a minimal
resolution $\pi:(K_0,D)\to(S_0,\varpi)$ such that~$\pi_*(Y_0)=X_0$.
\end{theorem}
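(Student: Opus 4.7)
My plan is to reduce, via the minimal good resolution, to the combinatorial picture of Proposition~\ref{propcycles} and then reconstruct the Kato data from it.

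I would start by fixing a minimal good resolution $\pi:(S,D,X)\to(S_0,\varpi,X_0)$; by \S\ref{ssscias}, $X$ is semicomplete in a neighborhood of~$D$. Since $D$ contracts to a single point, its intersection form is negative definite, so no non-zero effective divisor supported on $D$ can have vanishing self-intersection. The hypothesis that $X_0$ has no separatrix through $\varpi$ translates into two facts about $(S,D)$: the dual graph $\Gamma$ of $D$ is not a tree (otherwise Camacho's theorem, recalled in the Introduction, would produce a separatrix of $\mathcal{F}_{X_0}$ through $\varpi$), and every curve invariant by $\mathcal{F}_X$ that meets $D$ is contained in $D$ (else it would descend to such a separatrix). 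Proposition~\ref{propcycles} therefore applies. Its first possibility (where $\Gamma$ coincides with its cycle and supports an effective $Z$ with $Z^2=0$) is excluded by negative-definiteness, so we are left with the second: $\Gamma$ has a unique cycle $\Gamma_0$, all branching vertices of $\Gamma$ have degree three and belong to $\Gamma_0$, every irreducible component of $D$ is a rational orbifold, and every node of $\Gamma_0$ is a \emph{finite ramification} point of~$X$.

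The heart of the argument is then to reconstruct the Kato data $(\widehat\pi,\widehat\sigma)$ from this configuration. I would pick a node $p_0=C_0\cap C_1$ of the cycle, ``cut'' the cycle there (formally separating the two local branches at~$p_0$), and pass to a surface-germ $(\widehat S,\widehat D)$ in which the modified divisor $\widehat D$ is a tree of rational curves. The key claim to establish is that this tree contracts to a smooth point, yielding $\widehat\pi:(\widehat S,\widehat D)\to(\mathbf{C}^2,0)$. The numerical input for this contractibility comes from the local normal form of~$X$ at every node (through the reciprocity relation~(\ref{reciprocity})) and from the Camacho-Sad relations propagated recursively along each tree attached to the cycle; together these should recover the Hirzebruch--Jung-type condition characterizing trees contractible to a smooth point. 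The second piece of data $\widehat\sigma:(\mathbf{C}^2,0)\to(\widehat S,q)$, with $q$ a smooth point of $\widehat D$ on (say) $C_1$, is read off the local normal form of $X$ at $p_0$: it identifies the picture of $\mathbf{C}^2$ around $0$ (via $\widehat\pi$) with the picture of $\widehat S$ around $q$, in such a way that $\widehat\sigma\circ\widehat\pi$ preserves $\widehat\pi_*X$.

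Kato's construction applied to $(\widehat\pi,\widehat\sigma)$ then produces a compact surface $K$ endowed with a vector field, which descends to a minimal Kato surface $K_0$ with vector field $Y_0$. That contracting the maximal reduced divisor $D\subset K_0$ recovers $(S_0,\varpi)$ and that $Y_0$ pushes forward to $X_0$ follows from the purely local nature of the reconstruction together with the uniqueness of the minimal resolution of $(S_0,\varpi)$. The main obstacle I anticipate is the middle step: organizing the numerical data of the cycle-with-trees configuration carefully enough to certify that the cut tree is indeed contractible to a smooth point, and that the local gluing realized by $\widehat\sigma$ is compatible with the dynamics of $X$ at the chosen node.
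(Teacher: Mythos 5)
Your first step (minimal good resolution, negative definiteness excluding the first alternative of Proposition~\ref{propcycles}, hence a unique cycle with unbranched trees attached at degree-three vertices) is exactly the paper's reduction. The gap is in the middle step, which you yourself flag as the main obstacle, and as sketched it would fail. First, the surgery is not a mere separation of the two local branches at a node: if you only disconnect the branches, all self-intersections are unchanged and the resulting tree is in general \emph{not} contractible to a smooth point at all --- in Example~\ref{exocyc}(a) one would get a chain with weights $-2,-3,-3$, with no exceptional curve of the first kind to start blowing down. What the paper does is remove one of the two arcs created by the cut (the one with negative ramification index) and glue in a copy of $(\mathbf{C}^2,0)$ via the \emph{non-invertible} monomial map $\psi(x,y)=(xy^m,y)$ coming from the finite-ramification normal form at the chosen node. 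This collapse is what changes the self-intersection of the component through the new point $q$ (compare $C_0^2=-3$ in Example~\ref{exocyc}(a) with $\widehat{C}_0^2=-1$ in Table~\ref{tabsec}) and what simultaneously produces the germ $\widehat{\sigma}$, making $\widehat{f}=\widehat{\pi}\circ\widehat{\sigma}$ a genuinely contracting Kato germ; without it there is no reason for your ``Hirzebruch--Jung-type'' numerical check to succeed, because the unmodified configuration simply is not exceptional.

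Second, the choice of node cannot be arbitrary. Contractibility is not certified by propagating Camacho--Sad relations on the original weights, but by the first alternative of Lemma~\ref{notreeram} (applied via Remark~\ref{notreerem}, since $\widehat{X}$ is only known to be semicomplete near the divisor), and that alternative requires the root separatrix $\widehat{\gamma}_0$ to have ramification index exactly $1$. This forces the specific cut: take a degree-three component $C_0$ of the cycle; by Lemma~\ref{notreeram} the point where its tree is attached has index $>1$, and since a rational orbifold has only one point of index $>1$, the remaining cycle node on $C_0$ has index $1$ --- that is the node to cut, keeping the germ of $C_0$ (index $1$) as the root arc and collapsing the germ of $C_1$ (negative index). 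Cutting elsewhere, the retained index would be $>1$ and the second alternative of Lemma~\ref{notreeram} gives no contraction to a smooth point. Finally, the last step is also not automatic from ``local nature plus uniqueness of the minimal resolution'': one must exhibit an equivariant embedding of a neighborhood of $D$ in $S$ into $K$, which the paper does by cutting $S$ along $\psi^{-1}(\Sigma_\epsilon)$ and matching the boundary identification defining $K$; only then does contracting exceptional curves identify the minimal resolution of $(S_0,\varpi)$ with $(K_0,D_0)$ carrying $Y_0$.
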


Let us begin the proof of this Theorem. Let~$\pi:(S,D,X)\to(S_0,\varpi,X_0)$ be a minimal good resolution in the sense of Definition~\ref{reducedvf}. Let~$\Gamma$ be the dual graph of~$D$. The vector field~$X$ is semicomplete in a neighborhood of~$D$, as discussed in~\S\ref{ssscias}. By Camacho's theorem~\cite{camacho-singular}, since~$\mathcal{F}_{X_0}$ has no separatrix through~$p$, $\Gamma$ has at least one cycle, $\Gamma_0$. Since~$D$ may be collapsed to a singular point, its intersection form is negative definite. Hence, by Proposition~\ref{propcycles},  every irreducible component of~$D$ is rational,
$\Gamma$ has a unique cycle and~$\Gamma$ contains, beyond the cycle, some trees that do not ramify and that are attached to the cycle (and thus~$D$ resembles the maximal divisor of rational curves in an intermediate Kato surface).
From the  combinatorial description of~$(S,X,D)$ done in the previous Section, we will construct a Kato data~$(\widehat{\pi},\widehat{\sigma})$ and an embedding of the minimal resolution of~$(S_0,\varpi)$ into the corresponding Kato surface. The reader is invited to have in mind the divisors and vector fields of Example~\ref{exocyc}. \\

Let~$C_0$  be an irreducible component of~$D$ corresponding to a vertex of degree three in the dual graph (it belongs to the cycle and there is a tree attached to it). The affine structure makes $C_0$ a rational orbifold. By Lemma~\ref{notreeram}, the point in~$C_0$ where the tree is attached has ramification index greater than one. There is thus a point~$p\in C_0$ such that~$\mathrm{ind}(C_0,p)=1$ and such that there exists an irreducible component~$C_1$ of~$D$ that belongs to the cycle and that intersects~$C_0$ at~$p$ (there is a third point in~$C_0$ where the affine structure has negative ramification index and where $C_0$ intersects an irreducible component of~$D$ belonging to the cycle: the case where this irreducible component is still~$C_1$ is not excluded). Let~$C_2$, \ldots, $C_n$ be the remaining irreducible components of~$D$. 

There are coordinates~$(x,y)$ around~$p$ where~$X$ is given by
$$ x^ny^{mn-1}\left(-mx\del{x}+y\del{y}\right), $$
where $n=\mathrm{ord}(C_0,p)$ and~$m=-\mathrm{ind}(C_1,p)$. In these coordinates, $C_0$ is defined by~$x=0$ and~$C_1$ by~$y=0$. Let~$\psi:(S,p)\to(\mathbf{C}^2,0)$ the holomorphic (non-invertible) mapping given by~$\psi(x,y)=(xy^m,y)$. If~$(z,w)$ are coordinates around~$(\mathbf{C}^2,0)$ and if~$Y=z^n\partial /\partial w$, $\psi_*X=Y$.

For each irreducible component~$C_i$ of~$D$ let~$U_i$ be a small (real) tubular neighborhood of~$C_i$. In the disjoint union~$\sqcup_i U_i$ identify the points that are equal in~$S$ \emph{except} those in the connected component of~$U_0\cap U_1$ containing~$p$. Let~$S^\sharp$ denote the resulting surface, $\pi^\sharp:S^\sharp\to S$ the natural immersion and~$X^\sharp$  the vector field in~$S^\sharp$ induced by~$X$. Let~$C_i^\sharp\subset S^\sharp$ be the compact curve coming from~$C_i$ in~$U_i$ and, for~$i\in\{0,1\}$, let~$p^\sharp_i\in S^\sharp$ be the preimage of~$p$ coming from~$U_i$. The support of the reduced divisor~$\sum C_i^\sharp$ has no  cycles. There are two separatrices for~$X^\sharp$ that are not contained in the divisor. One of them, $\gamma_1$, comes from~$C_1\cap U_0$ and passes through~$p^\sharp_0\in S^\sharp$. The other, $\gamma_0$, comes from~$C_0\cap U_1$ and passes through~$p^\sharp_1$.

Consider, in a neighborhood of~$p^\sharp_0$, the mapping~$\psi_0=\psi\circ\pi^\sharp$. It is a biholomorphism in the complement of the separatrix~$\gamma_1$. Let~$\widehat{S}$ be the surface obtained by from~$S^\sharp$ by removing~$\gamma_1$ and gluing back a neighborhood of the origin of~$\mathbf{C}^2$ via~$\psi_0$. We will denote by~$q\in \widehat{S}$ be the point corresponding to the origin in~$\mathbf{C}^2$ and by~$\widehat{\sigma}:(\mathbf{C}^2,0)\to(\widehat{S},q)$ the tautological mapping.  The surface~$\widehat{S}$ has a naturally defined vector field~$\widehat{X}$ obtained by the identification of~$X^\sharp$ in~$S^\sharp\setminus \gamma_1$ and~$Y$ on~$\mathbf{C}^2$   (after the surgery, the vector field around~$q$ is \emph{affine regular}). Let~$\kappa:S^\sharp\to \widehat{S}$ be the induced map (it collapses~$\gamma_1$ to a point and is an embedding in restriction to the complement of this curve).  Let~$\widehat{C_i}=\kappa(C_i^\sharp)$ and define~$\widehat{\gamma}_0$  and~$\widehat{p}_1$ similarly.
 Let~$\widehat{D}=\cup_{i=0}^n \widehat{C}_i$.

We affirm that $\widehat{D}$ may be contracted to a non-singular point in a surface. The induced affine structure makes every irreducible component of~$\widehat{D}$ a rational orbifold and the dual graph of~$\widehat{D}$ is a tree. The arc~$\widehat{\gamma}_0$ intersects~$\widehat{C}_1$ at~$\widehat{p}_1$ and $\mathrm{ind}(\widehat{\gamma}_0,\widehat{p}_1)=1$. We are in the setting of Lemma~\ref{notreeram}: we have a tree of rational orbifolds, $\widehat{D}$, rooted at an invariant arc, $\widehat{\gamma}_0$, whose affine structure has a positive ramification index and there are no further separatrices of~$\widehat{D}$. Since $\mathrm{ind}(\widehat{\gamma}_0,\widehat{p}_1)=1$, Lemma~\ref{notreeram} implies that~$\widehat{D}$ may be collapsed to a non-singular point in a surface (we do not know a priori if the vector field~$\widehat{X}$ in~$\widehat{S}$ is still semicomplete; however, we know that it is locally so and, by Remark~\ref{notreerem}, Lemma~\ref{notreeram} gives  still the desired result). In a 
neighborhood 
of~$\widehat{p}_1$, the contraction of~$\widehat{D}$ may be given by~$\psi\circ\pi^\sharp\circ\kappa^{-1}$, which maps~$\widehat{X}$ to~$Y$. This establishes coordinates around the blowdown of~$\widehat{D}$ or, equivalently, explicits a map~$\widehat{\pi}:(\widehat{S},\widehat{D})\to(\mathbf{C}^2,0)$ contracting~$\widehat{D}$ to a point. \\

The couple~$(\widehat{\pi},\widehat{\sigma})$ is a Kato data and produces, as explained at the beginning of this section, a compact surface~$K$ whose minimal model is a Kato surface~$K_0$. Tautologically, $(\widehat{\sigma}\circ\widehat{\pi})_*\widehat{X}=\widehat{X}$ and hence~$K$ (resp.~$K_0$) is naturally endowed with a vector field~$Z$ (resp.~$Z_0$). The complete construction is illustrated in Figure~\ref{fig:reconstructing}. \\

\begin{figure}
\begin{center}
\includegraphics[scale=0.7]{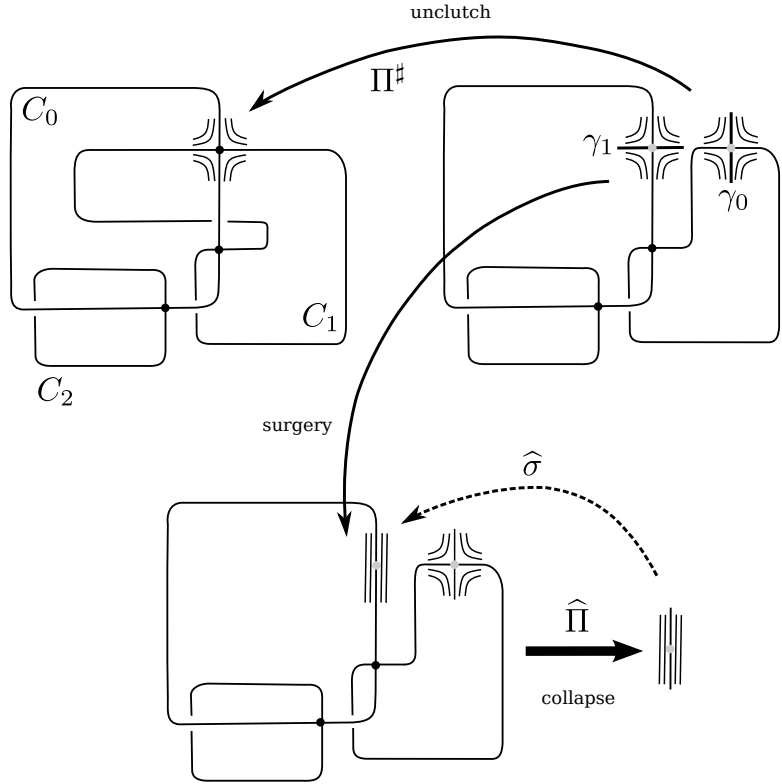}
\end{center}
\caption{Finding the Kato data}\label{fig:reconstructing}
\end{figure}

We will now embed~$S$ into~$K$ while mapping~$X$ to~$Z$. Consider, for some sufficiently small~$\epsilon$, the sphere $\Sigma_\epsilon$ as in formula~(\ref{ball-sphere}). In a neighborhood of~$p$, cut~$S$ along~$\psi^{-1}(\Sigma_\epsilon)$ in order to produce a (non-compact) manifold-with-boundary~$N$ with two boundary components, one corresponding to the interior  of~$\Sigma_\epsilon$, and one corresponding to its exterior. There is a natural identification between these two boundary components (two points in different boundary components are identified if they correspond to the same point in~$S$). There is a unique lift of~$N$ into~$S^\sharp$ that does not intersect~$\gamma_1$. It is still an embedding and, moreover, remains an embedding after composition with~$\kappa$. This gives an embedding~$j:N\to\widehat{S}$. Through this embedding, one of the boundary components of~$j(N)$ lies within~$\widehat{\pi}^{-1}(\Sigma_\epsilon)$; the other, within~$\widehat{\sigma}(\Sigma_\epsilon)$. By construction, the 
identification of $\widehat{\pi}^{-1}(\Sigma_\epsilon)$ and $\widehat{\sigma}(\Sigma_\epsilon)$ that produces the compact surface~$K$ identifies tautologically the boundary components of~$N$. This produces an embedding~$i:S\to K$ that maps~$X$ to the globally defined vector field~$Z$. If~$P:(S_\mu,D_\mu)\to(S_0,q)$ is the minimal resolution, there is a map~$\Theta:(S,D)\to (S_\mu,D_\mu)$ that may be factored as a sequence of blowdowns of exceptional curves.  Upon contracting the exceptional curves in~$K$ following the same pattern, we obtain an embedding of~$(S_\mu,D_\mu)$ into the Kato surface~$(K_0,D_0)$. This proves Theorem~\ref{main-local}.

\begin{figure}
\begin{center}
\includegraphics[scale=0.7]{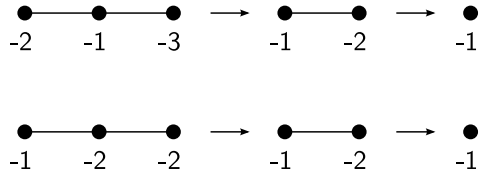}
\end{center}
\caption{Collapsing the divisors of Example~\ref{exocollapse}}\label{fig:contracting}
\end{figure}

\begin{table}
a) 
\begin{tabular}{|c|c|c|r|}
\hline
curve & type  & $\mathrm{ord}(X,\cdot)$ & self-int. \\  \hline
$\widehat{\gamma}_0$ & --- & $1$ &   ---  \\
$\widehat{C}_1$ & $2$ & $1$ &   $-2$ \\ 
$\widehat{C}_0$ & $3$ & $1$ &   $-1$  \\
$\widehat{C}_2$ & $1$ & $0$ &   $-3$ \\ \hline
\end{tabular}\\

b)
\begin{tabular}{|c|c|c|r|}
\hline
curve & type  & $\mathrm{ord}(X,\cdot)$ & self-int. \\  \hline
$\widehat{\gamma}_0$ & --- & $1$ &   ---  \\
$\widehat{C}_1$ & $3$ & $2$ &   $-1$ \\ 
$\widehat{C}_0$ & $2$ & $1$ &   $-2$  \\
$\widehat{C}_2$ & $1$ & $0$ &   $-2$ \\ \hline
\end{tabular}
\caption{Some instances of the construction}\label{tabsec}
\end{table}

\begin{exem}\label{exocollapse} For the divisors~$D$ of Example~\ref{exocyc}, the divisors~$\widehat{D}$, given by~$\widehat{C}_1\text{---}\widehat{C}_0\text{---}\widehat{C}_2$, are those whose weighted dual graphs appear in Figure~\ref{fig:contracting}. The figure describes the ways in which the successive contraction of the exceptional curves of the first kind leads to the contraction of all of~$\widehat{D}$. The separatrix~$\widehat{\gamma}_0$ intersects the component~$\widehat{C}_1$, corresponding in the figure to the leftmost vertices, and~$\mathrm{ind}(\widehat{\gamma}_0,\widehat{\gamma}_0\cap \widehat{C}_1)=1$. Table~\ref{tabsec} shows, in each case, the relevant data for the irreducible components of the divisor.
\end{exem}

To go from Theorem~\ref{main-local}, the germified version of Theorem~A, to the global one, we will use the dynamics of vector fields on Kato surfaces. According to~\cite[Thm.~2.14]{dloussky-vf1}, the maximal reduced divisor of rational curves~$D_0$ of~$K_0$ is an \emph{attractor} for the flow of~$Z_0$: every integral curve of~$Z_0$ that is not contained in~$D_0$ contains~$D_0$ in its closure. We may precise further the way in which the integral curves of~$Z_0$ accumulate to~$D_0$ (see also~\cite[Ex.~2.2]{brunella-kato}):
\begin{prop}\label{domainsvfkato} Let~$K_0$ be an intermediate Kato surface, $D_0$ its maximal reduced divisor of rational curves and $Z_0$ a holomorphic vector field in~$K_0$. Every neighborhood of~$D_0$ contains a neighborhood~$U$ of~$D_0$ such that if~$x\notin D_0$ and~$\phi:\mathbf{C}\to K$ is the solution of~$Z_0$ with initial condition~$x$ then~$\phi^{-1}(U)\subset\mathbf{C}$ is a connected set such that any connected component of its complement is compact.
\end{prop}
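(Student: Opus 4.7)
The plan combines the local normal forms of $Z_0$ near $D_0$ with Dloussky's attractor theorem (\cite[Thm.~2.14]{dloussky-vf1}). Recall that, on an intermediate Kato surface equipped with a holomorphic vector field, the local analysis carried out in the proof of Proposition~\ref{propcycles} shows that the only singularities of $Z_0$ on $D_0$ are of \emph{finite ramification} type, while the remaining points of $D_0$ are \emph{affine regular}. In both cases, explicit holomorphic first integrals are available: $F=x^n y^m$ at the finite-ramification intersections (with coordinates as in Table~\ref{table:combinatorics}), and the transverse coordinate $y$ at affine-regular charts $\{y=0\}\subset D_0$.

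First I would construct $U$ from a finite covering of $D_0$ by coordinate charts: around each intersection point of two components of $D_0$, a chart in which $Z_0$ takes the finite-ramification form; around each regular point of $Z_0$ on $D_0$, a flow-box chart. In each chart, define a local piece of $U$ as the bidisk $\{|x|<\delta,|y|<\delta\}$ at a finite-ramification chart and as the tubular neighbourhood $\{|y|<\delta\}$ at an affine-regular chart. Globally $U$ is the union of these pieces, with $\delta>0$ small enough for $U$ to lie inside the prescribed neighbourhood of $D_0$.

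Second, I would establish compactness of each connected component of $\mathbf{C}\setminus\phi^{-1}(U)$. By compactness of $K_0$, the accumulation set $L=\bigcap_{R>0}\overline{\phi(\{|t|>R\})}$ is a non-empty compact $Z_0$-invariant subset of $K_0$. If $L$ contained a point $q\notin D_0$, then the orbit of $q$ together with its $\omega$-limit points would form a non-empty compact $Z_0$-invariant subset of $K_0\setminus D_0$, contradicting the fact that every non-trivial orbit accumulates on $D_0$ (\cite[Thm.~2.14]{dloussky-vf1}). Hence $L\subset D_0$, so for every open neighbourhood $V$ of $D_0$ there is $R>0$ with $\phi(\{|t|>R\})\subset V$; applied to $V=U$, this gives $\mathbf{C}\setminus\phi^{-1}(U)\subset\{|t|\leq R\}$, so every connected component is closed and bounded, hence compact.

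The connectedness of $\phi^{-1}(U)$ is the main obstacle. The preceding step yields a unique unbounded component $W_\infty$ of $\phi^{-1}(U)$, and the task is to rule out bounded components. Here the adapted choice of $U$ is essential: in each finite-ramification bidisk chart the holomorphic first integral $F=x^n y^m$ is constant along $\phi$, and in each affine-regular flow-box chart the transverse coordinate $y$ is constant along $\phi$; moreover both functions vanish on $D_0$. If $W$ were a bounded connected component of $\phi^{-1}(U)$, one would track the orbit $\phi|_W$ through the finitely many charts it visits, using the reciprocity relation~(\ref{reciprocity}) to match first-integral values across overlaps of adjacent charts. Along $W_\infty$ these invariant values tend to $0$ (since $\phi(t)$ approaches $D_0$ as $|t|\to\infty$), while on $W$ they are pinned to fixed non-zero constants; a continuity argument comparing the values on $\partial W$ and $\partial W_\infty$ across the boundary pieces of $U$ in flow-box charts then yields a contradiction, ruling out bounded components and establishing $\phi^{-1}(U)=W_\infty$.
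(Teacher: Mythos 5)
The central step of your argument is false. In the second step you claim that the accumulation set $L$ of $\phi$ at infinity is contained in $D_0$, and deduce that $\phi(\{|t|>R\})\subset U$ for some $R$, i.e.\ that $\mathbf{C}\setminus\phi^{-1}(U)$ is \emph{bounded}. First, the purported contradiction is not one: by \cite[Thm.~2.14]{dloussky-vf1} the closure of the orbit of a point $q\notin D_0$ \emph{contains} $D_0$, so the set ``orbit of $q$ together with its limit points'' is not a compact invariant subset of $K_0\setminus D_0$, and no contradiction follows. Second, and more seriously, the conclusion itself is wrong for intermediate Kato surfaces: writing the vector field in the Kato fundamental domain as $Y=z^n\partial/\partial w$ with the return map $\widehat{f}(z,w)=(z^{k+1},z^{nk}w+\tau(z))$, the first integral $z$ is multiplied by the $(k+1)$-st power each time the orbit crosses the global spherical shell, so each solution domain at a given level is a disk deprived of $k+1$ smaller disks, and across \emph{each} of these inner circles the orbit escapes again to levels of larger modulus, i.e.\ it leaves any fixed neighborhood $U$ of $D_0$ infinitely often, at arbitrarily large times. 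Hence $\mathbf{C}\setminus\phi^{-1}(U)$ is unbounded (it has infinitely many compact components), $L\not\subset D_0$, and the attractor theorem only gives accumulation on $D_0$, not convergence. This is exactly why the statement is phrased as ``every connected component of the complement is compact'' rather than ``the complement is compact''; your step~2 proves a stronger statement that is simply not true, so neither the compactness of the components nor the uniqueness of an unbounded component of $\phi^{-1}(U)$ is established by it.

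The connectedness step, which you rightly identify as the main difficulty, is also not carried out: the local first integrals $x^ny^m$ and $y$ do not patch to anything global (going once around the cycle of $D_0$ the transverse invariant returns raised to a power --- this monodromy is precisely the Kato contraction), and the dichotomy ``values tending to $0$ on $W_\infty$ versus pinned nonzero values on $W$'' produces no contradiction, since a hypothetical bounded component would sit at a fixed level where a nonzero constant value is perfectly consistent. The paper avoids all of this by working directly with the Kato data: it chooses $U$ as a sublevel set $\{|z|\leq\delta\}$ of the fundamental-domain coordinate (not a union of chart bidisks, whose seams are not adapted to the flow), describes each solution domain $\Omega_\delta\subset\mathbf{C}$ explicitly as a round disk minus round disks, and reads off from the gluing $z\mapsto z^{k+1}$ how these domains are chained together toward $D_0$; connectedness of $\phi^{-1}(U)$ and compactness of the complementary components are then read off from this explicit picture. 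To repair your proof you would need an argument of this global, semi-global nature; the purely local chart bookkeeping plus the attractor theorem cannot suffice.
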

\begin{proof} Let~$\widehat{\pi}:(\widehat{S},\widehat{D})\to(\mathbf{C}^2,0)$ and~$\widehat{\sigma}$ be the corresponding Kato data and let~$\widehat{f}:(\mathbf{C}^2,0)\to (\mathbf{C}^2,0)$ be the germ~$\widehat{\pi}\circ \widehat{\sigma}$. There exists a vector field~$Y$ (induced by~$Z$) such that~$\widehat{f}_*Y=Y$. In suitable coordinates, we may suppose that~$Y=z^n\partial/\partial w$ for some~$n>0$. It has the first integral~$z$. Since $\widehat{f}$ preserves this vector field, it must be, up to a change of coordinates (preserving~$Y$), of the form \begin{equation}\label{presvf}\widehat{f}(z,w)=(z^{k+1},z^{nk}w+\tau(z)),\end{equation}
for some~$k\geq 1$ (Favre gave normal forms for such contracting germs and the above ones belong to the \emph{special} case of Class~4 in~\cite{favre-kato}; yet, the above formula will suffice for our needs). Let~$M^*=(B_\epsilon\cup \Sigma_\epsilon\setminus \widehat{f}(B_\epsilon))\cap\{z\neq 0\}$. It is a manifold-with-boundary with boundary components contained in~$\Sigma_\epsilon$ and~$\widehat{f}(\Sigma_\epsilon)$. Consider, within~$M^*$, the orbit~$O_\delta$ of~$Y$ given by~$z^{-1}(\delta)$. Suppose that~$\delta$ is small enough so that~$O_\delta$ intersects the two boundary components of~$M^*$. Close to~$\Sigma_\epsilon$,  the flow of~$Y$ restricted to~$M^*$ is defined in the interior of round disks in~$\mathbf{C}$. Close to~$\widehat{f}(\Sigma_\epsilon)$, the solutions of~$Y$ are defined in the exterior of round disks in~$\mathbf{C}$. The parametrization of~$O_\delta$ induced by~$Y$ is thus defined in a domain~$\Omega_\delta\subset\mathbf{C}$, which is a  round disk deprived of some disjoint round 
disks (more than one by holonomy considerations).

Since~$\widehat{f}$ is a local biholomorphism away from~$y=0$, it identifies the two boundary components of~$M^*$ and produces a non-compact manifold (without boundary)~$K^\circ$. This manifold has one end and embeds naturally into the Kato surface~$K_0$ (via the embedding~$\widehat{\pi}^{-1}:M^*\to\widehat{S}$). Its image is the complement in~$K_0$ of~$D_0$. Through this embedding, the end of~$K^\circ$ gets compactified by~$D_0$.

By~(\ref{presvf}), the identification of the boundary components of~$M^*$ will map the boundary of~$O_\delta$ to the boundary of~$O_{\delta^{k+1}}$ in such a way that, when gluing the parametrizations given by~$Z_0$,  the outer boundary component of~$\Omega_\delta$ is glued to one of the inner boundary components of~$\Omega_{\delta^{k+1}}$. Hence, within~$K_0$ and as we approach~$D_0$, the domain where the solution of~$Z_0$ is defined contains the union~$\cup_i \Omega_{\delta^{(k+1)^i}}$ (see Figure~\ref{fig:domkato}).

Thus, for every neighborhood~$W$ of~$D_0$ there exists some~$\delta>0$ such that the image of~$B=\pi^{-1}(\{z; |z|\leq \delta \}\cap M^*)$ in~$K_0$ is contained in~$W\setminus D_0$. The neighborhood~$U$ of~$D_0$ given by the interior of~$B\cup D_0$ has the required properties.
\end{proof}

\begin{figure}
\begin{center}
\includegraphics[scale=0.7]{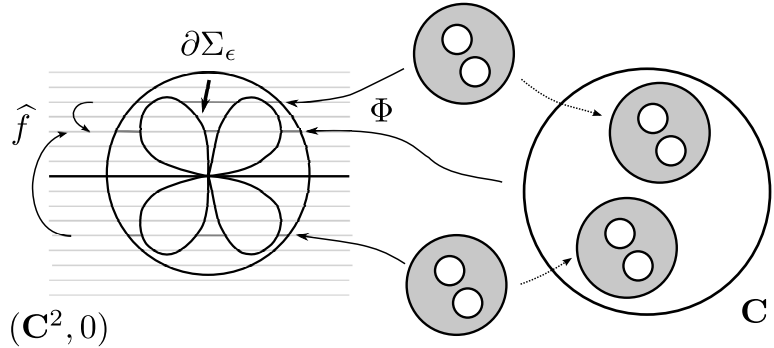}
\end{center}
 \caption{The domains in~$\mathbf{C}$ where the solutions of the vector field in~$M^*\subset\mathbf{C}^2$ are defined, and the identification between the boundaries of these domains induced by~$\widehat{f}$}\label{fig:domkato}
\end{figure}

\begin{proof}[Proof of Theorem~A] Let~$(S_0,\varpi,X_0)$ be a triple satisfying the hypothesis of Theorem~A. Let~$\pi:(S,D,X)\to(S_0,\varpi,X_0)$ be a minimal good resolution. By Theorem~\ref{main-local} there is a (blown-up) Kato surface~$K$ with a divisor~$D_K$ and a vector field~$Z$ and there is a neighborhood~$U$ of~$D_K$ such that there is a mapping~$\Psi_0:U\to S$ (the one guaranteed by Theorem~\ref{main-local}). We will suppose that~$U$ is a neighborhood like those produced by the proof of Proposition~\ref{domainsvfkato}.

Let~$\Phi_Z:\mathbf{C}\times K\to K$ and~$\Phi_X:\mathbf{C}\times S\to S$ be the corresponding flows. Since every integral curve of~$Z$ that is not contained in~$D_K$ contains~$D_K$ in its closure, for each~$q\in K$ there is some~$t_q\in\mathbf{C}$ such that~$\Phi_Z(t_q,q)\in U$. Define~$\Psi:K\to S$ as
\begin{equation}\label{katodiffeo}\Psi(q)=\Phi_X(-t_q,\Psi_0\circ \Phi_Z(t_q,q)).\end{equation}
We claim that~$\Psi$ is a biholomorphism, this is, that it is (i)~well-defined, (ii)~onto and~(iii) one-to-one. If~$V_q\subset\mathbf{C}$ is a connected neighborhood of~$t_q$ such that~$\Phi_Z(t_q',q)\in U$ for every~$t_q'\in V_q$ then~$t_q$ and~$t_q'$ define the same function in~(\ref{katodiffeo}). In particular, since~$\Phi_Z^{-1}(U)$ is connected (by Proposition~\ref{domainsvfkato}), the mapping is well defined. The image of~$\Psi$ is a compact set containing a neighborhood of~$D$ that is saturated by~$\mathcal{F}_X$ and is thus open. Hence, $\Psi$ is onto. It remains to prove that~$\Psi$ is one-to-one. Suppose that there are two different points in~$K$ having the same image under~$\Psi$ (they cannot belong both to~$U$ for~$\Psi$ is one-to-one in restriction to~$U$). If they belong to different orbits of~$Z$, the images of these orbits are the same one and thus there exist two orbits of~$X$ in restriction to~$\Psi_0(U)$ that get identified within~$S$. But this is impossible because of the nature of the 
domains where the flow of~$Z$ (in restriction to~$U$) is defined (Proposition~\ref{domainsvfkato}): two such domains must intersect. If the points are in the same orbit of~$Z$, one of the orbits of~$X$ has a period. However, by the description of the domains where the solution is defined (the solution is one-to-one in arbitrarily large domains), no period may arise. This finishes the proof of Theorem~A.\end{proof}

We may rephrase the passage from Theorem~\ref{main-local} to Theorem~A in the following way: \emph{If~$K$ is an intermediate Kato surface with vector field~$X$ and~$U$ is a neighborhood of the union of the rational curves in~$K$ then if~$Y$ is a complete vector field on the surface~$N$ and~$i:(U,X|_U)\to(N,Y)$ is an equivariant embedding, then~$i$ extends to a biholomorphism}. In general, given a semicomplete (and non-complete) holomorphic vector field~$X$ in some $n$-dimensional complex manifold~$M$, there exists  a \emph{completion of~$X$}, an $n$-dimensional manifold $N$, a complete holomorphic vector field~$Y$ on~$N$ and an equivariant embedding~$i:(M,X)\to (N,Y)$~\cite{palais} (there may be many of them and the manifold~$N$ may be non-Hausdorff). A remarkable fact is that, in the present situation, the dynamics of the vector field on a Kato surface force the uniqueness and the compactness of the completion.

\begin{rema}\label{negdef} In \S\ref{sec-cycles}, there remained the problem of understanding the vector fields related to the last possibility of Proposition~\ref{propcycles}: to understand the combinatorics that are realizable by a semicomplete vector field and, for example, to distinguish the cases where the corresponding intersection form is negative definite. Theorem~\ref{main-local} and its proof establish that such divisors are exactly the ones found in Kato surfaces admitting vector fields. By~\cite[Thm.~2.27]{dloussky-kato}, their intersection forms are negative definite.
\end{rema}

\section{Isolated equilibrium points in Stein surfaces}\label{sec:stein}

The \emph{Hirzebruch-Jung} or \emph{cyclic quotient} surface singularity~$A_{n,m}$ is the germ of analytic space obtained by taking the quotient of~$(\mathbf{C}^2,0)$ under the linear action of~$\mathbf{Z}/n\mathbf{Z}$
generated by
\begin{equation}\label{linearcation}(z,w)\mapsto (\xi z, \xi^m w),\end{equation}
for some primitive~$n^\text{th}$ root of unity~$\xi$ and some~$m<n$ such that~$(m,n)=1$. By writing
$$\frac{n}{m}=k_1-\cfrac{1}{k_2-\cfrac{1}{\ddots -\cfrac{1}{k_s}}},$$
we obtain a sequence of integers~$k_i\geq 2$. The exceptional divisor of a minimal resolution of~$A_{n,m}$ consists of~$s$ rational curves~$C_1,\ldots C_s$, such that~$C_i\cdot C_i=-k_i$, $C_i\cdot C_j=1$ if~$|i-j|=1$ and $C_i\cdot C_j=0$ otherwise. Reciprocally, if a singularity has a resolution of this form, it is analytically equivalent to~$A_{n,m}$, for the relatively prime integers~$n$ and~$m$ obtained from the sequence~$k_i$ via the above continued fraction~\cite[Ch.~III, \S 5]{BPV}.  \\

Let us proceed to the Proof of Theorem~B. Let~$S_0$, $\varpi$ and~$X_0$ be, respectively, a Stein surface, a singular point in~$S_0$ and a complete vector field on~$S_0$ like in the statement Theorem~B. Let~$\pi:(S,D,X)\to (S_0,\varpi,X_0)$ be a minimal good resolution. Every curve invariant by~$X$ intersecting~$D$ but not contained in it (a curve coming from a separatrix of~$X_0$ at~$\varpi$) does so transversely at a smooth point of~$D$.

Let~$\gamma:(\mathbf{C},0)\to(S,q)$ be a curve such that~$\pi\circ\gamma$ is a separatrix of~$X_0$ at~$\varpi$ (such a curve exists by Theorem~A). The restriction of~$X$ to the image of~$\gamma$ is a vector field of the form~$f(z)\partial/\partial z$ (with~$f$ not identically zero by hypothesis). Since this vector field is semicomplete, up to a reparametrization of~$\gamma$, the vector field is either of the form~$z^2\partial/\partial z$ or~$\lambda z\partial/\partial z$ for some~$\lambda\in\mathbf{C}^*$~\cite[\S 3]{rebelo}. In the first case, the separatrix may be parametrized by~$t\mapsto -t^{-1}$ and we must conclude that the orbit of~$X$ containing~$\gamma\setminus\{q\}$ has trivial stabilizer and is compactified by~$q$ into a rational curve within~$S$ which is not contained in~$D$, which is impossible (this is the only point where we use the Steinness assumption on~$S_0$;  more generally, we may suppose that~$S_0$ does not contain rational curves). The restriction of~$X$ to every separatrix is hence 
locally given by~$\lambda z\partial/\partial z$.

Let~$E$ be an irreducible component of~$D$ that is  not  invariant by~$\mathcal{F}$. By Table~\ref{table:combinatorics} and the fact that~$X$ has no zeros near~$D$ that are not contained in~$D$, at every point of~$E$, $X$ is, in suitable coordinates, $f(x,y)x\partial/\partial x$ for some non-vanishing function~$f$. There is thus a function~$\lambda:E\to\mathbf{C}^*$ that gives, for each~$p\in E$, the eigenvalue of the restriction of~$X$ to the orbit passing through~$p$. In the above local model, $f(x,y)x\partial/\partial x$, $E$ is given by~$x=0$ and~$g(y)=f(0,y)$. Hence, $\lambda$ is constant and the flow of~$X$, near~$E$, has period~$2i\pi\lambda^{-1}$. This proves Theorem~B in the case where not all irreducible components of~$D$ are invariant by~$\mathcal{F}$.

We will henceforth assume that~$D$ is invariant by~$\mathcal{F}$. Let~$\gamma$ be a separatrix intersecting an irreducible component~$C_1$ of~$D$ transversely at some point~$p_0$. We may now go through the list of local models in Table~\ref{table:combinatorics} and conclude that the only local models such that the restriction of~$X$ to~$\gamma$ is of the form~$\lambda z\partial/\partial z$, such that all the zeros of~$X$ (if any) are contained in~$D$ and such that~$D$ is invariant by~$\mathcal{F}$ are, up to an invertible multiplicative factor,
\begin{enumerate}
\item  $x(\lambda+\cdots) \partial/\partial x+y(\mu+\cdots)\partial/\partial y$, $\lambda,\mu\in\mathbf{C}$;
\item  $x(1+\nu y)\partial/\partial x+y^2 \partial/\partial y$, $\nu\in\mathbf{Z}$;
\end{enumerate}
with the separatrix being, in both cases, the curve~$\{y=0\}$ and~$D$ given by~$\{x=0\}$.

In the first case, $\mathrm{ind}(C_1,p_0)=-1$. We conclude that~$C_1$ is a rational orbifold of type~$1$, that it has no further singularities of~$\mathcal{F}$ and that~$D$ reduces to~$C_1$. A semicomplete vector field having this combinatorics in~$A_{n,1}$ is obtained by resolving the quotient of~$z\partial/\partial z+(w+z)\partial/\partial w$ under~(\ref{linearcation}), for~$n=-C_1^2$.

In the second case, $\mathrm{ind}(C_1,p_0)=\infty$.  Since the affine structure in~$C_1$ is induced by a non-zero vector field, $C_1$ is of type~$(\infty,\infty)$ and there is thus a point~$p_1\in C_1$ such that~$\mathrm{ind}(C_1,p_1)=\infty$.  If~$p_1$ is a \emph{singular non-degenerate} point and if~$D$ does not reduce to~$C_1$, there is a component~$C_2$ of~$D$ which intersects~$C_1$ at~$p_1$ and which is of type~$(\infty,\infty)$. Continuing this argument, we have,  within~$D$,   a maximal chain of the form
\begin{equation}\label{chain}C_1\stackrel{p_1}{\text{---}}C_2\stackrel{p_2}{\text{---}}\cdots \stackrel{p_{s-1}}{\text{---}}
C_{s},\end{equation}
with a point~$p_s\in C_s$, different from~$p_{s-1}$, where~$X$ vanishes, such that~$X$ is \emph{singular non-degenerate} at~$p_i$ for~$i=1,\ldots, p_{s-1}$, such that~$C_i$ is of type~$(\infty,\infty)$ for every~$i$ and such that either~$D$ reduces to the above chain or~$p_s$ is not a \emph{singular non-degenerate} point. Notice that, for every~$i$, the points of~$C_i$ other than~$p_{i-1}$ and~$p_i$ have positive ramification index. By Lemma~\ref{notreeram}, no separatrix or irreducible component of~$D$ may meet~$C_i$ at these points. If~$D$ reduces to (\ref{chain}),  $X$ is singular non-degenerate at~$p_s$, where it has another separatrix, and~$D$ is a Hirzebruch-Jung string.  A semicomplete vector field of this kind in~$A_{n,m}$ is obtained by resolving the quotient of~$\lambda z\partial/\partial z+w\partial/\partial w$  under the action~(\ref{linearcation}). If~$D$ does not reduce to (\ref{chain}), there is an irreducible component~$C_{s+1}$  of~$D$ that intersects~$C_s$ at~$p_s$. The vector field~$X$ 
must be a saddle-node at~$p_s$, for~$X$ is holomorphic and non-identically zero along~$C_s$. The component~$C_{s+1}$ must be a rational orbifold of order~$1$ and have no further singularities. The divisor~$D$ reduces to the Hirzebruch-Jung string $C_1\stackrel{p_1}{\text{---}}C_2\stackrel{p_2}{\text{---}}\cdots \stackrel{p_{s}}{\text{---}}
C_{s+1}$.  A semicomplete vector field of this kind in~$A_{n,m}$ is obtained by resolving the quotient of~$z\partial/\partial z+(mw+z^m)\partial/\partial w$ under~(\ref{linearcation}).  This finishes the proof of Theorem~B.\\

In the case where the flow of the vector field factors through and action of~$\mathbf{C}^*$,
Camacho, Movasati and Sc\'ardua~\cite{ca-mo-sca} proved that a holomorphic action of~$\mathbf{C}^*$ on a Stein surface with a dicritical singularity is holomorphically and equivariantly equivalent to an algebraic action of~$\mathbf{C}^*$ on an affine surface (a case widely studied by Orlik and Wagreich~\cite{ow}). A discussion around normal forms for the germs vector fields in~$A_{n,m}$ coming from non-degenerate ones in~$(\mathbf{C}^2,0)$ may be found in the work of S\'anchez-Bringas~\cite{sanchezbringas}.\\

Theorem~B generalizes Lemma~6.1 in~\cite{rebelo-realisation}, which affirms that a complete vector field on a non-singular Stein surface has two non-vanishing eigenvalues at an isolated equilibrium point.

\section{Vector fields on compact complex surfaces}\label{sec:comp}

We will now proceed to the proof of Theorem~C, by revisiting the strategy developed in~\cite{dloussky-vf1}, which resorts to the local theory of semicomplete holomorphic vector fields and uses their local models  up to biholomorphisms, as developed in~\cite{ghys-rebelo}, \cite{rebelo-mex} and \cite{rebelo-realisation}. On the one hand, our approach benefits from the proof of Theorem~A, which allows one to readily recognize Kato surfaces. On the other, by systematically adopting the bimeromorphic point of view, the list of local models of semicomplete vector fields becomes smaller (we need only consider the reduced ones). Finally, the use of the leafwise affine structure and its numerical invariants will allow us to deal more effectively with the combinatorics.\\

Let~$X$ be a holomorphic vector field on the compact complex (not necessarily minimal) surface~$S$. If there are infinitely many compact curves tangent to~$\mathcal{F}$, the latter has a first integral by a result of  Jouanoulou-Ghys~\cite{ghys-j}  (the generic level curve supports a non-identically zero vector field and is either rational or elliptic). We will henceforth suppose that there are only finitely many algebraic curves invariant by~$\mathcal{F}$.

Let~$D$ be the reduced divisor supported in the union of the algebraic curves invariant by~$\mathcal{F}$. Blow up as many singular points of~$X$ as necessary so that~$X$ becomes \emph{reduced} and~$(X,D)$ becomes \emph{minimal good} in the sense of Definition~\ref{def:reduced}.  

All the irreducible components of~$D$ are non-singular curves which are either rational or elliptic (for these are the only curves admitting uniformizable affine structures with singularities). Furthermore, we may suppose that all the rational curves have strictly negative self-intersection for, otherwise, the surface would be either rational or ruled~\cite[Prop.~4.3]{BPV}.

Arguing like in~\cite[Lemme~2.2]{dloussky-vf1}, if~$p\in S$ is a point where~$X(p)=0$ and if~$\gamma$ is a germ of curve invariant by~$\mathcal{F}$ such that~$X|\gamma$ is not identically zero, either the restriction of~$X$ to~$\gamma$ is equivalent to~$z^2\partial/\partial z$ or~$\lambda z\partial/\partial z$. We affirm that, in both cases, $\gamma$ is contained in an algebraic curve. In the first case, $\gamma$ is, as discussed in the previous Section, contained in an algebraic (rational) curve. In the second case, $\gamma$ is pointwise fixed by the flow of~$X$ in time~$2i\pi\lambda^{-1}$. The set of points of~$S$ fixed by~$X$ in time~$2i\pi\lambda^{-1}$ is a closed analytic subset of~$S$ which is not all of~$S$ since~$X$ induces an effective action of~$\mathbf{C}$. The curve~$\gamma$ is thus contained in an algebraic curve. 

In particular, if~$C$ is an irreducible component of~$D$ that is not invariant by~$\mathcal{F}$, every curve intersecting it must be algebraic (a case that has already been ruled out). Hence, we conclude,
\begin{itemize}
\item that every one-dimensional component of the locus of zeros of~$X$ is invariant by~$\mathcal{F}$, and
\item that  every germ of curve invariant by~$\mathcal{F}$ that intersects~$D$ is contained in~$D$.
\end{itemize}
Let~$D_0$ be a connected component of~$D$. If~$D_0$ contains a cycle then, by Proposition~\ref{propcycles}, either $D_0$ supports an effective divisor~$Z$ such that~$Z^2=0$ or, by Theorem~A and Remark~\ref{negdef}, $S$ is a Kato surface. We will henceforth suppose that~$D_0$ has no cycles.

\paragraph{With saddle-nodes or singular non-degenerate points} If the vector field is a saddle-node at the point~$p_0$ where the irreducible components~$C_0$, $C_1$ of~$D_0$ intersect, $\mathrm{ind}(C_1,p_0)=\infty$ and~$\mathrm{ind}(C_0,p_0)=-1$. The irreducible component~$C_0$ is necessarily a rational orbifold of type~$1$ carrying no further singular points of~$X$. The component~$C_1$ is of type~$(\infty,\infty)$, for its affine structure is induced by a non-vanishing vector field. There is thus a point~$p_1\in C_1$ where~$X$ vanishes and that is either a \emph{singular non-degenerate} or a \emph{saddle-node} point. If~$p_1$ is a \emph{singular non-degenerate} point, there is another component~$C_2$, intersecting~$C_1$ at~$p_1$, of type~$(\infty,\infty)$, along which~$X$ does not vanish, having another singular point~$p_2$ of~$X$. If~$p_1$ is a \emph{saddle-node}, there is another component~$C_2$ intersecting~$C_1$ at~$p_1$. Since~$\mathrm{ind}(C_2,p_1)=-1$, $C_2$ is a rational orbifold of type~$1$ 
having no further singularities of~$X$. We conclude that~$D_0$ is of the form $C_0\stackrel{p_0}{\text{---}}C_1\stackrel{p_1}{\text{---}}\cdots
\stackrel{p_l}{\text{---}} C_{l+1}$ ($l\geq 1$), with~$C_0$ and~$C_{l+1}$ rational orbifolds of type~$1$ and~$C_1$, \ldots, $C_l$ of type~$(\infty,\infty)$. The vector field~$X$ has a saddle-node at~$p_0$ and~$p_l$ and \emph{singular non-degenerate} points at~$p_1$, \ldots, $p_{l-1}$. Let~$\lambda_i\in\mathbf{C}^*$ be the eigenvalue of~$X$ at~$p_i$ within~$C_i$ (and thus~$-\lambda_i$ is the eigenvalue of~$X$ at~$p_{i-1}$ within~$C_i$). Let~$k_i=-C_i^2$ and suppose that~$k_i>0$. By the Camacho-Sad formula, the contribution of~$p_0$ (resp.~$p_l$) to the self-intersection of~$C_1$ (resp.~$C_l$) is zero (hence, if~$l=1$, $C_1^2=0$ so we will suppose that~$l>1$). By the Camacho-Sad formula, $k_1\lambda_1=\lambda_2$,
\begin{equation}\label{sn-cs}\lambda_i k_i=\lambda_{i-1}+\lambda_{i+1} \text{ for }i=2,\ldots, l-1, \end{equation} and~$k_l\lambda_l=\lambda_{l-1}$. Adding these equations, we get
\begin{equation}\label{impo}\sum_{i=1}^l(k_i-2)\lambda_i+\lambda_1+\lambda_l=0.\end{equation}
Up to dividing~$X$ by~$\lambda_1$, we may suppose that~$\lambda_1=1$ and thus that~$\lambda_2=k_1\in\mathbf{Z}$. From equation~(\ref{sn-cs})  for~$i=2$, we may solve  for~$\lambda_3$ and hence~$\lambda_3\in\mathbf{Z}$. Continuing this argument, we conclude that~$\lambda_i\in\mathbf{Z}$. All of them must be positive since~$-\lambda_i/\lambda_{i+1}\notin \mathbf{Q}^+$. From equation~(\ref{impo}), $k_i=1$ for some~$i$. This contradicts the minimal good character of~$(X,D)$. We conclude that such components appear only in the case~$l=1$, where we find a rational curve of vanishing self-intersection.

If there is a singular non-degenerate point~$p_0$, there are two invariant rational curves~$C_0$, $C_1$ through~$p_0$, whose ramification index at~$p_0$ is~$\infty$ and such that the vector field is not identically zero along them. Thus, there is another point~$p_1\in C_2$ where the affine structure has ramification index~$\infty$. Since~$D_0$ is free of cycles, this implies that there is a chain of curves of type~$(\infty,\infty)$ which must eventually have a \emph{saddle-node}, reducing this case to the previous one.

\paragraph{The other cases} If~$D_0$ contains an elliptic curve then the induced affine structure is non-singular and hence, by Lemma~\ref{notreeram}, $D_0$ reduces to an elliptic curve~$Z$ and there are no singularities of~$\mathcal{F}_X$ along the curve. By the Camacho-Sad formula, $Z^2=0$. In the other cases all the irreducible components of~$D_0$ are rational curves and $D_0$ is a tree. Label the irreducible components~$C_1$, \ldots, $C_n$ of~$D_0$ in such a way that, in the dual graph of~$D_0$, the subgraph generated by~$C_1, \ldots, C_k$ is connected for every~$k$, this is, for every~$i>1$ there is a unique~$\ell(i)<i$ such that~$C_i\cdot C_{\ell(i)}=1$.  The contribution of~$C_i\cap C_{\ell(i)}$ to the self-intersection of~$C_{\ell(i)}$ (by means of the Camacho-Sad formula) is a strictly negative rational~$r_i=\mathrm{CS}(\mathcal{F}_X,C_{\ell(i)},C_i\cap C_{\ell(i)})$, for we do not have neither \emph{saddle-nodes} nor \emph{singular non-degenerate points}, and all the singular points of~$\mathcal{F}$ are either \emph{finite} or~\emph{infinite ramification} points. Thus,  $C_j^2=r_j^{-1}+\sum_{k\in\ell^{-1}(j)}r_k$.

Let us show, following the proof of~\cite[Prop.~2.10]{dloussky-vf1}, that this implies that there is a divisor of vanishing self-intersection supported in~$D_0$. Let~$a_1=1$. Inductively, define $a_{j+1}=-r_{\ell(j+1)}a_{\ell(j+1)}$ (notice that~$a_{j+1}>0$) and let~$Z=\sum_{i}a_iC_i$. Then~$Z^2=0$ since
\begin{eqnarray*}Z\cdot C_j  =\sum_{i}a_iC_i\cdot C_j & = & a_jC_j^2+a_{\ell(j)}C_{\ell(j)}\cdot C_j+\sum_{k\in\ell^{-1}(j)}a_k C_k\cdot C_j \\
& = & a_j\left(C_j^2+\frac{a_{\ell(j)}}{a_j}+\sum_{k\in\ell^{-1}(j)}\frac{a_k}{a_j}\right)\\
& = & a_j\left(C_j^2-\frac{1}{r_j}-\sum_{k\in\ell^{-1}(j)}r_k\right)=0.
\end{eqnarray*}
Upon multiplying the~$\mathbf{Q}$\nobreakdash-divisor~$Z$ by some positive integer, we obtain the desired divisor. This proves Theorem~C.\\

We could precise further the nature of the divisor of vanishing self-intersection~$Z$ appearing in the statement of Theorem~C: it is either rational or a \emph{divisor of elliptic fiber type}, this is, it has the combinatorics of the (minimal good versions of the) divisors appearing in Kodaira's list of singular fibers in elliptic fibrations, like in~\cite[Thm.~A]{guillot-rebelo}. We will not pursue this direction.  \\

Our results may be used to give an alternative proof of Theorem~0.1 in~\cite{dloussky-vf2}, which completes the classification of minimal compact complex surfaces admitting vector fields.


\providecommand{\bysame}{\leavevmode ---\ }
\providecommand{\og}{``}
\providecommand{\fg}{''}
\providecommand{\smfandname}{\&}
\providecommand{\smfedsname}{\'eds.}
\providecommand{\smfedname}{\'ed.}
\providecommand{\smfmastersthesisname}{M\'emoire}
\providecommand{\smfphdthesisname}{Th\`ese}

\end{document}